\definecolor{Yingxiang}{rgb}{1, 0, 0}
\definecolor{other}{rgb}{0, 0, 1}
\definecolor{zhangshuai}{rgb}{1, 0, 1}
\newtheorem{lemma}{Lemma}
\newtheorem{theorem}{Theorem}
\begin{document}

\ExplSyntaxOn
\cs_set:Npn \__first_footerline: {}
\ExplSyntaxOff

\let\WriteBookmarks\relax
\def\floatpagepagefraction{1}
\def\textpagefraction{.001}

\shorttitle{Neural Networks for Solving PGIEPs}    

\shortauthors{}  

\title [mode = title]{Novel Product Manifold Modeling and Orthogonality-Constrained Neural Network Solver for Parameterized Generalized Inverse Eigenvalue Problems}

\author[1]{Shuai Zhang}

\ead{zhangs216@nenu.edu.cn}

\author[1]{Xuelian Jiang}
\ead{jiangxl133@nenu.edu.cn}

\author[1]{Yingxiang Xu}
\ead{yxxu@nenu.edu.cn}
\cormark[1]
\cortext[1]{Corresponding author.}

\affiliation[1]{organization={School of Mathematics and Statistics, Northeast Normal University},
city={Changchun},
postcode={130024}, 
country={China}}

\begin{keywords}
Parameterized Generalized Inverse Eigenvalue Problem \sep  Neural Networks  \sep  Orthogonal Constraint \sep Manifold
\end{keywords}

\begin{abstract}
    A parameterized orthogonality-constrained neural network is proposed for the first time to solve the parameterized generalized inverse eigenvalue problem (PGIEP) on product manifolds, offering a new perspective to address PGIEP. 
    The key contributions are twofold. First, we construct a novel model for the PGIEP, where the optimization variables are located on the product of a Stiefel manifold and a Euclidean manifold.
    This model enables the application of optimization algorithms on the Stiefel manifold, a capability that is not achievable with existing models. 
    Additionally, the gradient Lipschitz continuity of the objective function is proved. 
    Second, a parameterized Stiefel multilayer perceptron (P-SMLP) that incorporates orthogonality constraints is proposed.
    Through hard constraints, P-SMLP enables end-to-end training without the need of alternating training between the two manifolds, providing a robust computational framework for generic PGIEPs.
    Numerical experiments demonstrate the effectiveness of the proposed method.
\end{abstract}

\maketitle
\section{Introduction}
  Denote by $A(\textbf{c})$ and $B(\textbf{c})$ the affine families defined by
 \begin{equation}\label{AB}
  A(\textbf{c}) = A_0 + \sum_{i=1}^n c_i A_i \quad \text{and} \quad B(\textbf{c}) = B_0 + \sum_{i=1}^n c_i B_i,
  \end{equation}
 where the sequences $\{A_i\}_{i=0}^n$ and $\{B_i\}_{i=0}^n$ are given $n \times n$  real matrices and $\textbf{c}= (c_1, \dots, c_n)^{\top } \in \mathbb{R}^n$ is a real vector.
  The parameterized generalized inverse eigenvalue problem (PGIEP) is defined as follows.

 \textbf{PGIEP.} For given $n$ real numbers $\lambda_1\leqslant \lambda_2 \leqslant\cdots\leqslant\lambda_n$ and $A_i, B_i \in \mathbb{R}^{n\times n}(i=0,1,\dots,n)$, find $\textbf{c}= (c_1, \dots, c_n)^{\top } \in \mathbb{R}^n$ such that $A(\textbf{c})x=\lambda_i B(\textbf{c})x$ holds for $i = 1, 2, \dots, n$.

When both $A(\mathbf{c})$ and $B(\mathbf{c})$ are symmetric, we call the corresponding PGIEP symmetric, otherwise asymmetric.
 The PGIEP represents a significant subclass of inverse problems, with extensive applications in structural design \cite{joseph1992inverse}, inverse Sturm-Liouville problems \cite{osborne2006inverse,hald1972discrete}, vibrating strings \cite{thomson2018theory}, and applied mechanics \cite{majkut2010eigenvalue}.
 The PGIEP includes as special cases the additive inverse eigenvalue problem and multiplicative inverse eigenvalue problem.
 When $B_0$ is the identity matrix and all $B_i$ are zero matrices, the PGIEP reduces to the parameterized classical inverse eigenvalue problem (PCIEP) \cite{downing1956some,friedland1977inverse,chu2005inverse}.

Several models have been established for the PGIEP and solved using iterative methods
when $B(\mathbf{c})$ is positive definite.
Dai et al. \cite{dai1997newton} employed Newton’s method to solve the symmetric PGIEP, formulated as the following nonlinear system
\begin{equation}\label{fc1}
    f(\textbf{c})=\left(\lambda_1(\textbf{c})-\lambda_1,\dots,\lambda_n(\textbf{c})-\lambda_n\right)^{\top}=\lambda(\textbf{c})-\lambda^*=\textbf{0},
    \end{equation}
where $\lambda_1,\dots,\lambda_n$ are the given eigenvalues, $\lambda^*=(\lambda_1, \dots, \lambda_n)^{\top}$, and $\lambda(\textbf{c})=(\lambda_1(\textbf{c}), \dots, \lambda_n(\textbf{c}))^{\top}$.
This formulation provides a straightforward approach to the symmetric PGIEP and has been extensively investigated in the literature \cite{downing1956some,bohte1968numerical,oliveira1972matrix,friedland1987formulation}.
However, a direct application of Newton’s method to the system (\ref{fc1}) requires solving the generalized eigenvalue problem $A(\textbf{c})x=\lambda B(\textbf{c})x$ at each iteration. 
To overcome this drawback, (inexact) Newton-like method that doesn’t explicitly depend on the eigenvalues was proposed and studied in \cite{dalvand2021newton}. 
Additionally, Dai \cite{dai1999algorithm} extended the methods for solving PCIEP \cite{li1989qr,ren1992algorithms,ren1992compute} to address PGIEP by employing a QR-like decomposition, without requiring the computation of the complete solution to the generalized eigenvalue problem $A(\textbf{c})x=\lambda B(\textbf{c})x$ at each iteration, the corresponding nonlinear system was derived as

\begin{equation}\label{fc2}
    f(\textbf{c})=\left(r_{n n}^{(1)}(\textbf{c}),\dots,
        r_{n n}^{(n)}(\textbf{c})\right)^{\top}=\textbf{0},
    \end{equation}
    where each $r_{nn}^{(i)}(\textbf{c})(i=1,\dots,n)$ is obtained from the QR-like decomposition
    \begin{equation*}
  (A(\textbf{c})-\lambda_i I) P_i(\textbf{c})=Q_i(\textbf{c}) R_i(\textbf{c})
    \quad \text{with} \quad 
    R_i(\textbf{c})=\left(\begin{array}{cc}
    R_{11}^{(i)}(\textbf{c}) & R_{12}^{(i)}(\textbf{c}) \\
    0 & r_{nn}^{(i)}(\textbf{c})
    \end{array}\right),
    \end{equation*}
  $P_i(\textbf{c})$ are permutation matrices.
  Simultaneously, Dai \cite{dai1999algorithm} generalized the formulation (\ref{fc2}) from the single eigenvalue case to accommodate multiple eigenvalues.
  Depending on different methods of matrix decomposition, the nonlinear system can be further reformulated into alternative forms.
  See, for example, the following equivalent nonlinear system  \cite{dai2015solvability}

\begin{equation}\label{fc3}
    f(\textbf{c})=\left(u_{n n}^{(1)}(\textbf{c}),\dots,
        u_{n n}^{(n)}(\textbf{c})\right)^{\top}=\textbf{0},
    \end{equation}
where $u_{nn}^{(i)}(\textbf{c}) (i=1,\dots,n)$ are  obtained from the smooth LU decomposition
\begin{equation*}
    P_l^{(i)}(\textbf{c})(A(\textbf{c})-\lambda_i B(\textbf{c})) P_r^{(i)}(\textbf{c})=L_i(\textbf{c}) U_i(\textbf{c}),
\end{equation*}
    with $
U_i(\textbf{c})=\left(\begin{array}{cc}
        U_{11}^{(i)}(\textbf{c}) & U_{12}^{(i)}(\textbf{c}) \\
        0 & u_{n n}^{(i)}(\textbf{c})
        \end{array}\right)$ 
        and 
  $P_l^{(i)}(\textbf{c}), P_r^{(i)}(\textbf{c})$ being permutation matrices.

  Dalvand et al. \cite{dalvand2020extension} extended the Cayley transform method, which was developed for solving the symmetric PCIEP \cite{bai2004inexact,shen2015inexact}, to the symmetric PGIEP.

This method offers an alternative perspective for solving PGIEPs via orthogonal matrix iteration and also avoids solving a complete generalized eigenvalue problem and performing matrix factorizations at each iteration.
It is known from \cite{dai1997newton} that if the matrix pencil $(A(\textbf{c}), B(\textbf{c}))$ has prescribed eigenvalues $\lambda_1, \dots, \lambda_n (\lambda_i\neq \lambda_j,\forall i,j=1,\dots,n)$ and corresponding eigenvectors $q_1, \dots, q_n$, then there exists an orthogonal matrix Q such that
\begin{equation}\label{X5}
Q^{\top} A(\textbf{c}) Q = \Lambda \quad \text{and} \quad Q^{\top} B(\textbf{c}) Q = I,
\end{equation}
where $Q = [q_1, \dots, q_n]$ and $\Lambda = \operatorname{diag}\{\lambda_1, \dots, \lambda_n\}$.
Using the Cayley transform, the subsequent approximation $Q^{(k+1)}$ of the eigenmatrix is updated as
\begin{equation*}
Q^{(k+1)}=Q^{(k)} e^{Z^{(k)}} \simeq Q^{(k)}\left(I+\frac{1}{2} Z^{(k)}\right)\left(I-\frac{1}{2} Z^{(k)}\right)^{-1},
\end{equation*}
where the elements of the matrix \( Z^{(k)} \) are explicitly determined by \( q_i^{(k)} \), \( A(\textbf{c}^{(k)}) \), \( B(\textbf{c}^{(k)}) \), and \( \lambda_i \).
Dalvand et al. \cite{dalvand2020extension} also employed the extended Cayley transform method to solve the modified PGIEP with multiple eigenvalues described in \cite{friedland1987formulation}. This problem involves determining vector \( \textbf{c}\) such that the matrix pencil \( (A(\mathbf{c}), B(\mathbf{c})) \) has prescribed eigenvalues \( \lambda_1 = \cdots = \lambda_t < \lambda_{t+1} < \cdots < \lambda_{n-s} \) for its \( n-s \) smallest eigenvalues. But if the matrix \( B(\mathbf{c}) \) fails to be symmetric positive definite, then Eq. (\ref{X5}) does not necessarily hold. In particular, if \( B(\textbf{c}) \) is singular, Eq. (\ref{X5}) is never satisfied, then the Cayley transform method cannot be applied to solve the PGIEP.

In general, different type of PGIEPs leads to different nonlinear systems. 
For asymmetric PGIEP, based on the work in \cite{biegler1981newton,lancaster1964algorithms}, Dai \cite{dai1998some} proposed the following  nonlinear system using matrix determinant
    \begin{equation}\label{fc5}
        f(\textbf{c})=\left(\operatorname{det}(A(\textbf{c})-\lambda_1 B(\textbf{c})),\dots,\operatorname{det}(A(\textbf{c})-\lambda_n B(\textbf{c}))\right)^{\top}=\textbf{0},
    \end{equation}
which is not applicable to the multiple eigenvalue case. Afterwards,  following the work of Xu \cite{xu1996smallest}, Dai constructed the nonlinear system 
    \begin{equation}\label{fc6}
        f(\textbf{c})=\left(\sigma_{\text{min}}(A(\textbf{c})-\lambda_1 B(\textbf{c})),\dots,\sigma_{\text{min}}(A(\textbf{c})-\lambda_n B(\textbf{c}))\right)^{\top}=\textbf{0}
    \end{equation}
    to tackle this issue, 
where $\sigma_{\text{min}}(A(\textbf{c})-\lambda_i B(\textbf{c}))$ denotes the smallest singular value of matrix $A(\textbf{c})-\lambda_i B(\textbf{c})$.
From Eqs. (\ref{fc1})–(\ref{fc3}) and (\ref{fc5})–(\ref{fc6}), it is evident that solving different types of PGIEPs requires solving distinct nonlinear systems.

Among the nonlinear systems and algorithms discussed above, the PGIEP is restricted due to the constraints imposed on $B(\textbf{c})$.
This naturally raises the question: Can a model and its corresponding algorithm be developed for solving the general PGIEP problem defined in (\ref{AB}), including cases where $B(\textbf{c})$ is singular?

From the work of Dalvand et al., it is established that employing the orthogonal matrix updating idea to solve PGIEP effectively circumvents the need to solve the generalized eigenvalue problem $A(\textbf{c})x = \lambda B(\textbf{c})x$ in each iteration.
So if the PGIEP can be transformed into an optimization problem with orthogonal constraints like the standard IEP \cite{chu1990projected,chu1991constructing,chen2011isospectral,zhao2018riemannian}, then optimization methods designed for problems with orthogonality constraints can be employed for efficient solutions. Machine learning, particularly artificial neural networks (ANNs), has demonstrated promising potential for solving optimization problems, possibly overcoming limitations inherent in traditional numerical optimization techniques.
Recently, various types of neural networks have been proposed and studied to address inverse eigenvalue problems in PDEs. 
For example, k-Nearest Neighbours and Random Forests \cite{pallikarakis2024application} were employed to address the inverse Sturm-Liouville eigenvalue problem with symmetric potentials and a radial basis function neural network was used to solve an inverse problem associated with the calculation of the Dirichlet eigenvalues of the anisotropic Laplace operator \cite{ossandon2016neural}. 
The Stiefel multilayer perceptron (SMLP) \cite{zhang2024orthogonal}, an orthogonally constrained neural network architecture, was developed to solve algebraic inverse eigenvalue problems on the Stiefel manifold.
This orthogonal constraint embedded method exhibits superior computational efficiency compared to a soft-constrained MLP and is more robust in initial guess than traditional manifold optimization algorithms.
Unfortunately, the SMLP is not applicable within the current modeling framework of the PGIEP.

In this paper, motivated by the orthogonally constrained neural network for solving the algebraic inverse eigenvalue problem, we propose a novel optimization model on product manifolds consists of Stiefel and Euclidean manifolds for the PGIEP and extend the approach in \cite{zhang2024orthogonal} to solve the PGIEP.
The most significant advantage of this novel model is that it can be solved by algorithms based on orthogonal constraints, which provides an innovative computational framework for solving the PGIEP. The main contributions of this paper are as follows:
\begin{itemize}
    \item An optimization model for the PGIEP on product manifolds is proposed, which is applicable to symmetric, asymmetric cases, as well as cases with multiple eigenvalues in the PGIEP. Moreover, it is also capable of handling cases where $B(\textbf{c})$ becomes singular, which existing models are unable to solve.
     \item Parameterized Stiefel multilayer perceptron, 
    an extension of the SMLP, is developed to solve PGIEPs. The P-SMLP method simultaneously optimizes the orthogonal matrix and the vector $\textbf{c}$, eliminating the need for alternating optimization and enabling end-to-end training while rigorously preserving orthogonality constraints.
    \item To the best of our knowledge, this paper represents the first attempt to solve the general form of the PGIEP using neural networks, providing a new perspective for addressing the PGIEP.
\end{itemize}

The rest of this paper is organized as follows. In Section \ref{2}, we introduce some matrix decomposition results, concepts related to manifolds and the MLP method.
In Section \ref{3}, we discuss the new model and P-SMLP in detail. Firstly, we model the PGIEP based on the generalized real Schur decomposition and introduce a mask matrix for structural constraints. Then, we provide a theoretical analysis of the gradient Lipschitz continuity for the model. Finally, the P-SMLP is introduced by modifying the output of the SMLP, enabling end-to-end optimization to obtain the parameter vector $\textbf{c}$.
In Section \ref{4}, several numerical experiments showcase the validity and accuracy of the proposed method. In the end, conclusions are drawn in Section \ref{5}.

\section{Preliminaries}\label{2}
For ease of reading, fundamental concepts employed in this paper are briefly introduced, including concepts from matrix decomposition, manifolds, and neural networks.

\subsection{Matrix decomposition}

\begin{theorem}{(QR decomposition)}
A real square matrix $A \in \mathbb{R}^{n \times n}$ can be decomposed as
    \begin{equation*}
    A = QR,
    \end{equation*}
    where \( Q \in \mathbb{R}^{n \times n} \) is an orthogonal matrix and \( R \in \mathbb{R}^{n \times n} \) is an upper triangular matrix.     
\end{theorem}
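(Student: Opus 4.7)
The plan is to prove existence of the QR decomposition constructively via Gram--Schmidt orthogonalization applied to the columns of $A$. Writing $A = [a_1, \ldots, a_n]$, I would recursively define $\tilde{q}_k = a_k - \sum_{j<k}\langle a_k, q_j\rangle q_j$ and $q_k = \tilde{q}_k/\|\tilde{q}_k\|$, then assemble $Q = [q_1, \ldots, q_n]$ together with the upper triangular $R$ having entries $r_{jk} = \langle a_k, q_j\rangle$ for $j<k$, $r_{kk} = \|\tilde{q}_k\|$, and $r_{jk} = 0$ for $j>k$. A short induction on $k$ shows that $a_k = \sum_{j\le k} r_{jk} q_j$ column by column, which is exactly $A = QR$, while the pairwise orthonormality of the $q_k$ gives $Q^{\top} Q = I$.

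The main obstacle is the rank-deficient case: if $A$ is singular then at some step $k$ the residual $\tilde{q}_k$ vanishes, and the normalization step breaks down. To patch this uniformly I would, whenever $\tilde{q}_k = 0$, choose $q_k$ to be any unit vector in the orthogonal complement of $\mathrm{span}\{q_1, \ldots, q_{k-1}\}$ (nonempty since $k\le n$) and set $r_{kk} = 0$. The identity $a_k = \sum_{j\le k} r_{jk} q_j$ is preserved because the missing diagonal term contributes zero, and upper triangularity of $R$ is untouched.

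An alternative route that sidesteps the rank-deficient case distinction altogether is to build $R$ by a finite sequence of Householder reflections: at step $k$, pick a reflection $H_k$ whose action on the $k$th column of $H_{k-1}\cdots H_1 A$ zeroes out all entries below the diagonal. After $n-1$ such steps one has $H_{n-1}\cdots H_1 A = R$ upper triangular, and setting $Q = (H_{n-1}\cdots H_1)^{\top}$ completes the factorization, with $Q$ orthogonal as a product of orthogonal matrices. Either construction would be short to write out; I expect to present the Gram--Schmidt version for transparency and to remark on the Householder variant for robustness.
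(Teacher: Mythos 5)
Your proof is correct and complete: the Gram--Schmidt construction with the explicit patch for the rank-deficient case (choosing any unit vector in the orthogonal complement and setting $r_{kk}=0$) handles singular $A$ properly, and the Householder variant is a valid alternative. The paper itself states this theorem as a standard preliminary without any proof, so there is nothing to compare against; either of your two constructions would serve as a full justification of the statement as given.
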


\begin{theorem}{(Singular value decomposition)}
    For $A \in \mathbb{R}^{n \times n}$, there exist orthogonal matrices $U \in \mathbb{R}^{n \times n}$ and $V \in \mathbb{R}^{n \times n}$ and a diagonal matrix $\Sigma=\operatorname{diag}\left(\sigma_1^*, \cdots, \sigma_n^*\right) \in \mathbb{R}^{n \times n}$ with $\sigma_1^* \geqslant \sigma_2^* \geqslant \cdots \geqslant \sigma_n^* \geqslant 0$, such that
    \begin{equation*}
    A=U \Sigma V^{\top}
    \end{equation*}
    holds, where $\sigma_i^*$ represents the singular values. The column vectors of $U=\left[{u}_1, \cdots, {u}_m\right]$ are called the left singular vectors and similarly the column vectors of $V=\left[{v}_1, \cdots, {v}_n\right]$ are the right singular vectors.
\end{theorem}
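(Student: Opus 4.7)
The plan is to derive the SVD from the spectral theorem applied to the symmetric positive semidefinite matrix $A^\top A$. First I would apply the spectral theorem to obtain an orthogonal matrix $V = [v_1, \ldots, v_n]$ and a diagonal matrix $D = \operatorname{diag}(d_1, \ldots, d_n)$ with $d_1 \geqslant d_2 \geqslant \cdots \geqslant d_n \geqslant 0$ satisfying $V^\top (A^\top A) V = D$; non-negativity of the $d_i$ follows from $v_i^\top A^\top A v_i = \|A v_i\|^2 \geqslant 0$. The singular values are then defined by $\sigma_i^* = \sqrt{d_i}$, which automatically satisfy the required ordering and non-negativity.

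Next, let $r$ be the largest index with $\sigma_r^* > 0$. For $1 \leqslant i \leqslant r$, set $u_i = A v_i / \sigma_i^*$. Using $v_i^\top A^\top A v_j = d_j \, \delta_{ij}$, a short calculation shows $u_i^\top u_j = \delta_{ij}$, so $\{u_1, \ldots, u_r\}$ is orthonormal in $\mathbb{R}^n$. I would then extend this family to a full orthonormal basis $\{u_1, \ldots, u_n\}$ by choosing any orthonormal basis of its orthogonal complement (for instance, via Gram--Schmidt applied to a completion), and assemble $U = [u_1, \ldots, u_n]$, which is orthogonal.

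Finally, I would verify the identity $A V = U \Sigma$ column by column: for $i \leqslant r$ the relation $A v_i = \sigma_i^* u_i$ holds by construction, while for $i > r$ the identity $\|A v_i\|^2 = v_i^\top A^\top A v_i = d_i = 0$ forces $A v_i = 0 = \sigma_i^* u_i$. Multiplying on the right by $V^\top$ and using $V V^\top = I$ then yields $A = U \Sigma V^\top$. The main obstacle is the rank-deficient case in which several $\sigma_i^*$ vanish, which requires the extension of the $u_i$ to a complete orthonormal basis and a separate check that the defining relation still holds on the null directions; once this bookkeeping is handled, the rest of the argument is routine and relies only on the spectral theorem already available.
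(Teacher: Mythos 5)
Your argument is correct: deriving the SVD from the spectral theorem applied to the symmetric positive semidefinite matrix $A^\top A$, defining $\sigma_i^* = \sqrt{d_i}$, constructing $u_i = A v_i / \sigma_i^*$ on the nonzero singular directions, extending to a full orthonormal basis, and checking $A V = U \Sigma$ columnwise (including the null directions, where $\|A v_i\|^2 = d_i = 0$ forces $A v_i = 0$) is the classical and complete route. Note that the paper states this theorem as a standard preliminary without providing any proof, so there is nothing in the paper to compare your approach against; your proposal fills that gap with the textbook argument and handles the rank-deficient case properly.
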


\begin{theorem}{(Generalized real Schur decomposition \cite{golub2013matrix})}\label{GRSD}
Let \( A, B \in \mathbb{R}^{n \times n} \). There exist \( n \times n \) orthogonal matrices \( Q \) and \( Z \) such that \( Q^{\top} A Z \) is a block upper triangular matrix and \( Q^{\top} B Z \) is an upper triangular matrix, i.e.,
\begin{equation*}
Q^{\top} A Z = \left(\begin{array}{cccc}
A_{11} & A_{12} & \cdots & A_{1 r} \\
& A_{22} & \cdots & A_{2 r} \\
& & \ddots & \vdots \\
& & & A_{r r}
\end{array}\right):=S, \quad \text{and} \quad Q^{\top} B Z = \left(\begin{array}{cccc}
B_{11} & B_{12} & \cdots & B_{1 r} \\
& B_{22} & \cdots & B_{2 r} \\
& & \ddots & \vdots \\
& & & B_{r r}
\end{array}\right):=T,
\end{equation*}
where \( A_{jj} \) and \( B_{jj} \) are either \( 1 \times 1 \) or \( 2 \times 2 \) matrices. 
When the generalized eigenvalues of the matrix pair \( (A, B) \) are real, all \( A_{jj} \) and \( B_{jj} \) ($j = 1,\dots, r$) are \( 1 \times 1 \) matrices, and the generalized eigenvalues of the matrix pair \( (A, B) \) are given by
\begin{equation*}
\sigma(A, B) = \left\{ \frac{s_{ii}}{t_{ii}}, \quad i = 1,\dots, n \right\},
\end{equation*}
where $s_{ii}$ and $t_{ii}$ are the diagonal elements of matrices S and T, and the corresponding eigenvalue is infinite if \( t_{ii} = 0 \).
\end{theorem}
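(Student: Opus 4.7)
The plan is to proceed by strong induction on $n$, reducing the problem to finding a deflating pair of subspaces of dimension $k \in \{1,2\}$ at each step. Concretely, I would aim to construct, at every stage, a right subspace $\mathcal{S} \subseteq \mathbb{R}^n$ and a left subspace $\mathcal{T} \subseteq \mathbb{R}^n$ of equal dimension $k$ such that $A\mathcal{S} \subseteq \mathcal{T}$ and $B\mathcal{S} \subseteq \mathcal{T}$. Once this is done, taking orthonormal bases of $\mathcal{S}$ and $\mathcal{T}$ and extending via QR to orthogonal matrices $Z_1, Q_1 \in \mathbb{R}^{n\times n}$ gives
\[
Q_1^{\top} A Z_1 = \begin{pmatrix} A_{11} & A_{12}' \\ 0 & A_{22}' \end{pmatrix}, \qquad
Q_1^{\top} B Z_1 = \begin{pmatrix} B_{11} & B_{12}' \\ 0 & B_{22}' \end{pmatrix},
\]
with $A_{11}, B_{11} \in \mathbb{R}^{k\times k}$. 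Applying the induction hypothesis to the pair $(A_{22}', B_{22}') \in \mathbb{R}^{(n-k)\times(n-k)}$ produces orthogonal $\tilde Q, \tilde Z$, and assembling $Q = Q_1\,\mathrm{diag}(I_k, \tilde Q)$, $Z = Z_1\,\mathrm{diag}(I_k, \tilde Z)$ delivers the required decomposition.

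The construction of $(\mathcal{S}, \mathcal{T})$ splits into cases. If $B$ is invertible, I would apply the real Schur decomposition to $M := B^{-1}A$: a real eigenvalue of $M$ yields a $1$-dimensional real invariant subspace $\mathcal{S} = \mathrm{span}\{v\}$ (take $\mathcal{T} = B\mathcal{S}$), while a complex conjugate pair $\alpha \pm i\beta$ furnishes a $2$-dimensional real invariant subspace $\mathcal{S} = \mathrm{span}\{\mathrm{Re}(v), \mathrm{Im}(v)\}$ (again $\mathcal{T} = B\mathcal{S}$, of dimension $2$ by invertibility of $B$). When $B$ is singular, I would instead pick any $\mathbf{s} \in \ker B \setminus \{0\}$ and take $\mathcal{S} = \mathrm{span}\{\mathbf{s}\}$ with $\mathcal{T}$ any $1$-dimensional subspace containing $A\mathbf{s}$; this automatically introduces a row of zeros in $T$ at this diagonal entry, corresponding to an infinite generalized eigenvalue.

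After the induction terminates, the matrices $S := Q^{\top}AZ$ and $T := Q^{\top}BZ$ are block upper triangular with diagonal blocks of size $1$ or $2$, and $T$ is in fact upper triangular because within every $2\times 2$ deflating block chosen from a complex-conjugate pair, the $B$-restriction to $\mathcal{S}$ mapping into $\mathcal{T}$ can be further upper-triangularized by an additional $2\times 2$ orthogonal change of basis on each side (this is a small but non-trivial bookkeeping step using that the restricted pencil has no real eigenvalue, so $B|_\mathcal{S}$ can be factored via QR inside that block). The eigenvalue formula $\sigma(A,B) = \{s_{ii}/t_{ii}\}$ in the all-real case then follows immediately from the block upper triangular structure, with $t_{ii} = 0$ encoding the infinite-eigenvalue situation of a singular pencil.

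The main obstacle I anticipate is the $B$-singular branch, in particular ensuring that the deflation by $\ker B$ interacts cleanly with a subsequent pass through the real Schur machinery on the surviving $B$-invertible part. The cleanest route is probably a two-stage strategy: first peel off all of $\ker B$ by repeatedly invoking the singular-$B$ construction above, reducing to a sub-pair whose second matrix is invertible, and only then invoke real Schur to produce the $1\times 1$ and $2\times 2$ blocks. A secondary technicality is verifying that the $2\times 2$ blocks $B_{jj}$ can indeed be upper-triangularized without destroying the block structure on $A$; this hinges on the freedom to post-multiply $Z_1$ and pre-multiply $Q_1$ by $2\times 2$ orthogonal matrices on the corresponding diagonal positions, which preserves the required zero pattern.
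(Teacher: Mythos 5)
The paper does not prove Theorem~3 at all: the generalized real Schur (QZ) decomposition is quoted from Golub and Van Loan \cite{golub2013matrix} and used as a black box, so there is no in-paper argument to compare against. Your deflation proof is essentially the standard existence argument (going back to Stewart's QZ paper), and its skeleton is sound: the reduction to a deflating pair $(\mathcal{S},\mathcal{T})$ with $A\mathcal{S}\subseteq\mathcal{T}$ and $B\mathcal{S}\subseteq\mathcal{T}$, the extension to orthogonal $Q_1,Z_1$ (the $(2,1)$ blocks vanish because the columns of $Q_1$ complementary to $\mathcal{T}$ are orthogonal to $A\mathcal{S}$ and $B\mathcal{S}$), the induction on the trailing $(n-k)\times(n-k)$ pair, the extraction of a $1$- or $2$-dimensional real invariant subspace of $B^{-1}A$ when $B$ is invertible, and the peeling of $\ker B$ when it is not, all work as described; likewise the determinant identity $\det(A-\lambda B)=\pm\prod_i(s_{ii}-\lambda t_{ii})$ gives the eigenvalue formula once every block is $1\times 1$. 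Two details should be made explicit in a full write-up. First, in the singular-$B$ branch you must also cover $As=0$ for the chosen $s\in\ker B$ (a common null vector, i.e.\ a singular pencil): the deflation still goes through with an arbitrary one-dimensional $\mathcal{T}$, but then $s_{ii}=t_{ii}=0$ and the ratio $s_{ii}/t_{ii}$ is indeterminate, so the eigenvalue formula should be understood only for regular pencils, which is the implicit setting of the theorem and of Eq.~(\ref{st}). Second, your upper-triangularization of a $2\times 2$ block $B_{jj}$ by a left QR factor applied to that block row is correct and needs no hypothesis about real eigenvalues; it leaves the zero blocks below and to the left untouched and only remixes entries already in the upper triangle of both $S$ and $T$. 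Neither point is a gap in the approach, only in the level of detail.
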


\subsection{Manifolds}

\paragraph{Stiefel manifold.}
The Stiefel manifold, denoted as $\mathcal{S}_{n, r}$, comprises $n \times r$ rectangular matrices whose column vectors are orthonormal, i.e.,
     \begin{equation*}
     \mathcal{S}_{n, r}=\left\{Q \in \mathbb{R}^{n \times r}\mid  Q^{\top}Q=\boldsymbol{I}_r\right\},
     \end{equation*}
where it is assumed that $n \geqslant r$, \(\boldsymbol{I}_r\) represents \(r \times r\) identity matrix and $Q$ is semi-orthogonal when $n > r$ and is orthogonal when $n=r$. 
Specifically, the Stiefel manifold when $n=r$ is equivalent to the orthogonal group, which is the set of all orthogonal matrices, denoted as $\mathcal{O}(n)$.

\paragraph{Euclidean manifold.}
The Euclidean space, denoted as \( \mathbb{R}^n \), is the set of all \( n \)-dimensional vectors with real components, i.e.,
\begin{equation*}
\mathbb{R}^n := \{\mathbf{x} = (x_1, x_2, \dots, x_n)^\top \}.
\end{equation*}
Noting that \( \mathbb{R}^n \) is also a manifold, named as the Euclidean manifold.

\paragraph{Product manifold.}
Let \( \mathcal{M}_1, \mathcal{M}_2, \ldots, \mathcal{M}_k \) be \( k \) manifolds. The product manifold \( \mathcal{M} \) is defined as
\begin{equation*}
\mathcal{M} = \mathcal{M}_1 \times \mathcal{M}_2 \times \cdots \times \mathcal{M}_k,
\end{equation*}
where the elements of \( \mathcal{M} \) are ordered \( k \)-tuples \( (x_1, x_2, \dots, x_k) \), with \( x_i \in \mathcal{M}_i \) for \( i = 1, 2, \dots, k \).

\subsection{Neural network.}
The MLP, characterized by its fully connected input, hidden, and output layers, represents one of the most prevalent neural network architectures in machine learning applications.
The MLP, denoted as \(\mathcal{F} = \{f: \mathbb{R}^p \rightarrow \mathbb{R}^d\}\), is represented by
\begin{equation*}
f(x) = W_{\mathcal{L}} \circ \Phi \circ W_{\mathcal{L}-1} \circ \cdots  \circ W_1 \circ \Phi \circ W_0(x),
\end{equation*}
where \(x = (x_1, \ldots, x_p)^{\top}\in \mathbb{R}^p\), \(W_i(x) = w_i x + b_i\) is an affine transformation, \(\omega_i \in \mathbb{R}^{k_{i+1} \times k_i}\) is the weight matrix, \(b_i \in \mathbb{R}^{k_{i+1}}\) is the bias vector for \(i = 0, 1, \ldots, \mathcal{L}\) with \(k_i\) representing the number of neurons in the \(i\)-th hidden layer and \(\mathcal{L}\) denoting the number of hidden layers, also known as the depth of the network;  \(\Phi\) is the activation function.

\section{Methodology}\label{3}
In this section, a novel model is presented and a P-SMLP method is designed for solving the PGIEP.
\subsection{ Modeling}
We first discuss the modeling approach for the PGIEP when $B(\mathbf{c})$ is a nonsingular matrix, and then extend the model to the scenario where $B(\mathbf{c})$ is singular.

Given $n$ real and finite numbers $\lambda_1\leqslant \lambda_2\leqslant \cdots \leqslant\lambda_n$ and $A_i, B_i \in \mathbb{R}^{n\times n}(i=0,1,\dots,n)$,

compute the generalized real Schur decomposition of $A(\textbf{c})$ and  $B(\textbf{c})$ defined in Eq.(\ref{AB}) as follows
\begin{equation*}
    Q^{\top} A(\textbf{c}) Z =  S \quad \text{and} \quad Q^{\top} B(\textbf{c}) Z = T,
\end{equation*}
where $Q,Z \in \mathcal{O}(n)$,
and $S$ and $T$ are upper triangular matrices since $\lambda_1,\dots,\lambda_n$ are real and finite, according to Theorem \ref{GRSD}.
Let $s_{ii}$ and $t_{ii}(i=1,\dots,n)$ are the diagonal elements of the matrices \( S \) and \( T \), respectively. By Theorem \ref{GRSD}, the spectrum of the generalized eigenvalue problem \( A(\textbf{c})x = \lambda B(\textbf{c})x \) is given by

\begin{equation}\label{st}
    \sigma(A(\textbf{c}),B(\textbf{c}))=\left\{\frac{s_{ii}}{t_{ii}}\mid i=1,\dots,n,\ t_{ii}\neq 0 \right\},
\end{equation}
assuming ordered eigenvalues \(\frac{s_{11}}{t_{11}} \leqslant \cdots \leqslant \frac{s_{nn}}{t_{nn}}\).
A direct result of Eq.(\ref{st}) is
\begin{equation}\label{lts}
    \lambda_it_{ii}=s_{ii},\quad i=1,\dots,n.
\end{equation}
The diagonal matrices generated by the elements \( s_{ii} \), \( t_{ii} \), and \( \lambda_it_{ii} \) are denoted as
\begin{equation*}
    \begin{aligned} 
    \operatorname{diag}\left\{s_{11}, s_{22}, \cdots, s_{n n}\right\} & =I_n \odot S=I_n \odot\left(Q^{\top} A(\textbf{c}) Z\right), \\ 
    \operatorname{diag}\left\{t_{11}, t_{22}, \cdots, t_{n n}\right\} & =I_n \odot T=I_n \odot\left(Q^{\top} B(\textbf{c}) Z\right), \\
    \end{aligned}
\end{equation*}
and
\begin{equation*}
    \operatorname{diag}\left\{\lambda_1 t_{11}, \lambda_2 t_{22}, \cdots, \lambda_n t_{n n}\right\} = \Lambda \odot\left(Q^{\top} B(\textbf{c}) Z\right),
\end{equation*}
where the diagonal matrix
\begin{equation}\label{diaL}
  \Lambda=\operatorname{diag}\left\{\lambda_{1}, \lambda_{2}, \cdots, \lambda_{ n}\right\},  
\end{equation}
and $\odot$ denotes the Hadamard product of the matrices.
Thus, Eq.(\ref{lts}) is equivalent to
\begin{equation}\label{3-1}
    \Lambda \odot\left(Q^{\top} B(\textbf{c}) Z\right) - I_n \odot\left(Q^{\top} A(\textbf{c}) Z\right)=\textbf{0}.
\end{equation}
Noting that $S$ and $T$ are upper triangular matrices, it holds
\begin{equation}\label{3-2}
    \| P \odot S\|_F = 0,\quad \text{and} \quad \| P \odot T\|_F = 0,
\end{equation}
where $P$ is an $n\times n$ mask matrix defined as 
\begin{equation}\label{P}
    P=\left(\begin{array}{cccccc}
        0 & 0 & 0 & \cdots & 0& 0 \\
        1& 0 & 0 & \cdots & 0& 0 \\
        1& 1 & 0& \cdots & 0& 0 \\
        \vdots & \vdots & \vdots & \ddots & \vdots & \vdots \\
        1 & 1& 1&\cdots & 0& 0\\
        1 & 1& 1&\cdots & 1& 0
        \end{array}\right).
\end{equation}
Combining Eq.(\ref{3-1}) and Eq.(\ref{3-2}), given the PGIEP optimization model on the product manifolds
\begin{equation}\label{model}
    \min_{(\textbf{c},Q,Z)\in \mathcal{M}} F(\textbf{c},Q,Z) = \frac{1}{2}\| \Lambda \odot\left(Q^{\top} B(\textbf{c}) Z\right) - I_n \odot\left(Q^{\top} A(\textbf{c}) Z\right) \|_F^2 + \frac{1}{2}\| P \odot (Q^{\top} A(\textbf{c}) Z)\|_F^2 + \frac{1}{2}\| P \odot (Q^{\top} B(\textbf{c}) Z)\|_F^2,
\end{equation}
where $\mathcal{M}=\mathbb{R}^n \times \mathcal{O}(n)\times \mathcal{O}(n)$.

In Eq.(\ref{st}), the condition $t_{ii} \neq 0$ is required, as implied by Theorem \ref{GRSD}, which is guaranteed by the fact that $B(\mathbf{c})$ is a nonsingular matrix.
We now consider the case where $B(\mathbf{c})$ is singular and the generalized eigenvalue problem $A(\mathbf{c})x = \lambda B(\mathbf{c})x$ has an infinite eigenvalue, denoted by $\lambda_n = \infty$ without loss of generality.

Given $n-1$ real and finite numbers $\lambda_1 \leqslant \lambda_2 \leqslant\cdots \leqslant\lambda_{n-1}$ and $\lambda_n=\infty$, it follows that $t_{nn} = 0$.
Then the spectrum of the generalized eigenvalue problem \( A(\textbf{c})x = \lambda B(\textbf{c})x \) is given by
\begin{equation}\label{stt}
    \sigma(A(\textbf{c}),B(\textbf{c}))=\left\{\frac{s_{11}}{t_{11}},\frac{s_{22}}{t_{22}}\dots,\frac{s_{n-1,n-1}}{t_{n-1,n-1}} \right\} \bigcup \left\{\infty \right\}.
\end{equation}
A direct result of Eq.(\ref{stt}) is
\begin{equation}\label{ltss}
    \lambda_kt_{kk}=s_{kk},\quad k=1,\dots,n-1.
\end{equation}
The diagonal matrix generated by the elements \( s_{kk} \) is denoted as
\begin{equation*}
    \begin{aligned} 
    \operatorname{diag}\left\{s_{11},s_{22},\cdots,s_{n-1,n-1}\right\}= \left[I_n \odot\left(Q^{\top} A(\textbf{c}) Z\right)\right]_{nn}, \\ 
    \end{aligned}
\end{equation*}
where $[*]_{nn}$ is the $(n-1) \times (n-1)$ submatrix of the $n \times n$ matrix $*$ obtained by removing the $n$-th row and $n$-th column.
Similarly, we get 
\begin{equation*}
    \begin{aligned} 
    \operatorname{diag}\left\{t_{11},t_{22}, \cdots,t_{n-1,n-1}\right\}=\left[I_n \odot\left(Q^{\top} B(\textbf{c}) Z\right)\right]_{nn}, \\
    \end{aligned}
\end{equation*}
and
\begin{equation*}
    \operatorname{diag}\left\{\lambda_1 t_{11}, \lambda_2 t_{22},\cdots,\lambda_{n-1} t_{n-1,n-1}\right\} = \left[\Lambda \odot\left(Q^{\top} B(\textbf{c}) Z\right)\right]_{nn}.
\end{equation*}
Thus, Eq.(\ref{ltss}) is equivalent to
\begin{equation}\label{3-ts}
    \left[\Lambda \odot\left(Q^{\top} B(\textbf{c}) Z\right)\right]_{nn} - \left[I_n \odot\left(Q^{\top} A(\textbf{c}) Z\right)\right]_{nn}=\textbf{0}.
\end{equation}
Consequently, when the matrix $B(\mathbf{c})$ is singular and $\lambda_n = \infty$, we obtain the PGIEP optimization model formulated on the product manifolds
\begin{equation}\label{model2}
\begin{aligned}
    \min_{(\textbf{c},Q,Z)\in \mathcal{M}} F(\textbf{c},Q,Z) &= \frac{1}{2}\| \left[\Lambda \odot\left(Q^{\top} B(\textbf{c}) Z\right)\right]_{nn} - \left[I_n \odot\left(Q^{\top} A(\textbf{c}) Z\right)\right]_{nn} \|_F^2 
     \\
     &\quad + \frac{1}{2}\| P \odot (Q^{\top} A(\textbf{c}) Z)\|_F^2 + \frac{1}{2}\| \hat{P} \odot (Q^{\top} B(\textbf{c}) Z)\|_F^2,
\end{aligned}
\end{equation}
where \(\hat{P} = P + \textbf{e}_n \textbf{e}_n^\top\) (with \(\mathbf{e}_n\) being the \(n\)-th column of identity matrix) enforces minimization of the \((n,n)\)-th entry of \(Q^\top B(\mathbf{c}) Z\) must be minimized, i.e., \(\left(Q^\top B(\mathbf{c}) Z\right)_{nn} = t_{nn}\).

Therefore, this naturally leads us to solve the PGIEP via optimization, computing $Q$ and $Z$ using orthogonality-preserving algorithms.
The strong scalability of neural networks facilitates the simultaneous optimization of all variables in Eq. (\ref{model}) and Eq. (\ref{model2}). 
Optimizers in neural networks are crucial, as they iteratively adjust trainable parameters to minimize the objective function through gradient-based methods. Among these, the Adam optimizer \cite{kingma2014adam} is highly effective and widely adopted.
A fundamental requirement for guaranteeing the convergence of the Adam algorithm is the Lipschitz continuity of the objective function’s gradients. 
Consequently, we proceed to discuss the Lipschitz continuity of the gradients in Eq. (\ref{model}) and Eq. (\ref{model2}).

\subsection{Gradient Lipschitz continuity.}
In this section, we establish the gradient Lipschitz continuity of the objective function in Eq.(\ref{model}). A similar result also holds for the objective function in Eq.(\ref{model2}). To this end, we need the following lemmas.

\begin{lemma}\label{le.1}
    (Heine-Borel theorem \cite{hayes1956heine,veblen1904heine})
    Let \( S \) be a subset of \( \mathbb{R}^n \). The following two assertions are equivalent:
    
(1) \( S \) is compact, meaning every open cover of \( S \) has a finite subcover.

(2) \( S \) is closed and bounded.
\end{lemma}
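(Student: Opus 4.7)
The plan is to split the equivalence into the two implications and treat them independently, using only the open-cover definition of compactness together with the completeness of $\mathbb{R}$.

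For the direction $(1)\Rightarrow(2)$, I would first verify boundedness by covering $S$ with the increasing family of open balls $\{B(0,k) : k \in \mathbb{N}\}$; extracting a finite subcover exhibits some $N$ with $S \subset B(0,N)$. To verify closedness, I would show the complement $\mathbb{R}^n \setminus S$ is open: fix $x \notin S$ and for each $y \in S$ pair the disjoint balls $B(y, \|x-y\|/2)$ and $B(x, \|x-y\|/2)$. The collection $\{B(y, \|x-y\|/2) : y \in S\}$ covers $S$, so admits a finite subcover indexed by $y_1,\ldots,y_m$, and then $B(x,r)$ with $r = \min_i \|x - y_i\|/2$ misses $S$ entirely, producing a neighborhood of $x$ in the complement.

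For the reverse direction $(2)\Rightarrow(1)$, I would proceed in two steps. First, I would establish that any closed box $B = \prod_{i=1}^n [a_i,b_i]$ is compact by a bisection argument: assume $\mathcal{U}$ is an open cover of $B$ admitting no finite subcover, then bisect $B$ into $2^n$ sub-boxes of half side length; at least one sub-box is still not finitely covered, and iterating yields a nested sequence $B \supset B_1 \supset B_2 \supset \cdots$ of closed boxes whose diameters tend to $0$. By completeness of $\mathbb{R}$, their intersection contains a point $p$ lying in some $U \in \mathcal{U}$; since $U$ is open, $B_k \subset U$ for all sufficiently large $k$, contradicting the choice of $B_k$. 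Second, since $S$ is bounded it sits inside some such closed box $B$; given any open cover $\mathcal{U}$ of $S$, the family $\mathcal{U} \cup \{\mathbb{R}^n \setminus S\}$ is an open cover of $B$ (using that $S$ is closed), and a finite subcover of $B$ with $\mathbb{R}^n \setminus S$ stripped out yields a finite subcover of $S$.

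The main obstacle is the compactness of the closed box, where completeness of $\mathbb{R}$ enters in an essential way; the two bookkeeping reductions at either end of the argument follow easily once this cornerstone is in place. Since the statement is the classical Heine--Borel theorem, in practice I would simply cite standard references such as \cite{hayes1956heine,veblen1904heine} rather than reproduce the full proof in the paper.
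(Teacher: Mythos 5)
Your proof sketch is the standard and correct argument for Heine--Borel (balls for boundedness, Hausdorff separation for closedness, bisection of closed boxes plus ``closed subset of a compact set is compact'' for the converse), and the paper itself gives no proof of this lemma at all --- it simply cites \cite{hayes1956heine,veblen1904heine}, exactly as you propose to do in your closing remark. So your treatment matches the paper's, with the bonus that your sketched argument is complete and correct should one wish to include it.
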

\begin{lemma}
    The set \( \bar{\Omega} = \{ (\textbf{c}, Q, Z) \mid Q \in \mathcal{O}(n), Z \in \mathcal{O}(n), \textbf{c} \in B_M \} \) is compact, where \( B_M = \{ \mathbf{v} \in \mathbb{R}^n \mid \|\mathbf{v}\|_2 \leqslant M \} \), and \( M \geqslant 0 \) is a constant.
\end{lemma}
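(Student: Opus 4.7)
The plan is to apply the Heine-Borel theorem (Lemma \ref{le.1}) by exhibiting $\bar{\Omega}$ as a closed and bounded subset of the ambient Euclidean space $\mathbb{R}^{n} \times \mathbb{R}^{n \times n} \times \mathbb{R}^{n \times n}$, identifying the latter with $\mathbb{R}^{n + 2n^2}$ in the standard way. Since $\bar{\Omega}$ is a Cartesian product of three sets, I would first verify closedness and boundedness for each factor separately, and then conclude that the product inherits both properties.

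First I would handle $B_M$: by definition it is the closed Euclidean ball of radius $M$ centered at the origin in $\mathbb{R}^n$, so it is manifestly bounded, and it is closed because it is the preimage of the closed interval $[0,M]$ under the continuous map $\mathbf{v} \mapsto \|\mathbf{v}\|_2$. Next, for $\mathcal{O}(n)$, boundedness follows from the identity
\begin{equation*}
\|Q\|_F^2 = \operatorname{tr}(Q^\top Q) = \operatorname{tr}(I_n) = n \quad \text{for every } Q \in \mathcal{O}(n),
\end{equation*}
so every orthogonal matrix lies on the Frobenius sphere of radius $\sqrt{n}$. Closedness of $\mathcal{O}(n)$ follows from writing it as the preimage $\Phi^{-1}(\{I_n\})$ of the singleton $\{I_n\}$ under the continuous map $\Phi: \mathbb{R}^{n\times n} \to \mathbb{R}^{n\times n}$ defined by $\Phi(Q) = Q^\top Q$.

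Having established that each factor is closed and bounded, I would invoke the elementary fact that a finite Cartesian product of closed sets is closed (in the product topology, which on $\mathbb{R}^{n+2n^2}$ coincides with the Euclidean topology) and that the product of bounded sets is bounded under any of the equivalent product norms. Thus $\bar{\Omega} = B_M \times \mathcal{O}(n) \times \mathcal{O}(n)$ is closed and bounded in $\mathbb{R}^{n + 2n^2}$, and Lemma \ref{le.1} then yields compactness.

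I do not anticipate any genuine obstacle here: the argument is routine once one recognizes that the orthogonality relation $Q^\top Q = I_n$ is a closed algebraic condition and that it forces $\|Q\|_F = \sqrt{n}$. The only point that merits a line of justification is the identification of the product topology with the Euclidean topology on the ambient space, so that the Heine-Borel criterion can be applied directly to $\bar{\Omega}$ rather than to each factor in isolation.
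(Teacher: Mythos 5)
Your proposal is correct and follows essentially the same route as the paper: verify boundedness via $\|Q\|_F=\sqrt{n}$ and $\|\mathbf{c}\|_2\leqslant M$, verify closedness of each factor and of their Cartesian product, and conclude by the Heine--Borel theorem. The only difference is that you spell out why $\mathcal{O}(n)$ and $B_M$ are closed (as preimages of closed sets under continuous maps), a detail the paper asserts without justification.
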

\begin{proof}
We prove the compactness of \( \bar{\Omega} \) in two steps.
As the first step, we show that \( \bar{\Omega} \) is bounded.
For any \( (\textbf{c}, Q, Z) \in \bar{\Omega} \), it holds
\begin{equation*}
\|Q\|_F = \sqrt{n}, \quad \|Z\|_F = \sqrt{n}, \quad \text{and} \quad \|\textbf{c}\|_2 \leqslant M.
\end{equation*}
Thus, the norm of \( (\textbf{c}, Q, Z) \), defined as 
\begin{equation*}
\|(\textbf{c}, Q, Z)\| := \sqrt{\|Q\|_F^2 + \|Z\|_F^2 + \|\textbf{c}\|_2^2}
\end{equation*}
is bounded from above by $\sqrt{2n + M^2}$,
that is to say, \( \bar{\Omega} \) is bounded. 

As the second step, we prove that \( \bar{\Omega} \) is closed.
By definition, both $\mathcal{O}(n)$ and $B_M$ are closed. 
Therefore, $\bar{\Omega}$, as the Cartesian product of $\mathcal{O}(n)$ and $B_M$, is closed. By Lemma \ref{le.1}, \( \bar{\Omega} \) is compact.   

\end{proof}

\begin{lemma}\label{le.3}
    Let \( Y, W \in \mathbb{R}^{n \times n} (n\geqslant 2) \). The following inequality holds
    \[
    \| Y \odot W \|_F \leqslant \| Y \|_F \| W \|_F.
    \]
Specifically, $\| I_n \odot W \|_F \leqslant \| W \|_F$\textup{;} $\| \Lambda \odot W \|_F \leqslant \| \Lambda \|_\infty \| W \|_F$ for \( \Lambda \) defined in Eq.(\ref{diaL}) and $ \| P \odot W \|_F \leqslant \sqrt{n-1} \| W \|_F$ for \( P \) defined in Eq.(\ref{P}).
\end{lemma}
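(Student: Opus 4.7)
The main inequality is a Cauchy--Schwarz type bound on a sum of non-negative products, and the three specific bounds all follow either from it or from elementary one-line estimates on the indices where the two matrices fail to vanish.

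First I would expand the left-hand side entrywise as $\|Y\odot W\|_F^2 = \sum_{i,j} y_{ij}^2 w_{ij}^2$. The key observation is that for any two non-negative sequences $\{a_{ij}\}$ and $\{b_{ij}\}$, one has $\sum_{i,j} a_{ij} b_{ij} \leqslant \bigl(\sum_{i,j} a_{ij}\bigr)\bigl(\sum_{k,l} b_{kl}\bigr)$, since the right-hand side is a double sum over $(i,j,k,l)$ whose summands include (among many non-negative others) precisely the terms $a_{ij}b_{ij}$ on the diagonal $k=i,\,l=j$. Taking $a_{ij} = y_{ij}^2$ and $b_{ij} = w_{ij}^2$ yields $\|Y\odot W\|_F^2 \leqslant \|Y\|_F^2\,\|W\|_F^2$, and extracting square roots gives the desired inequality.

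For the three listed corollaries, I would handle each directly rather than blindly applying the main inequality, because the latter is loose for structured $Y$. For $Y = I_n$, only the diagonal entries of $W$ survive, so $\|I_n\odot W\|_F^2 = \sum_i w_{ii}^2 \leqslant \sum_{i,j} w_{ij}^2 = \|W\|_F^2$. For $Y = \Lambda$ from Eq.(\ref{diaL}), the same truncation to the diagonal holds and each diagonal term is weighted by $\lambda_i^2$; pulling out $\max_i|\lambda_i|^2 = \|\Lambda\|_\infty^2$ gives $\|\Lambda\odot W\|_F^2 \leqslant \|\Lambda\|_\infty^2 \sum_i w_{ii}^2 \leqslant \|\Lambda\|_\infty^2\|W\|_F^2$. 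For $Y=P$ from Eq.(\ref{P}), which is a $0/1$ matrix supported on the strictly lower triangle, the nonzero entries of $P\odot W$ form a subset of those of $W$, so $\|P\odot W\|_F^2 \leqslant \|W\|_F^2$; the claimed bound then follows from the trivial enlargement $\|W\|_F^2 \leqslant (n-1)\|W\|_F^2$ when $n\geqslant 2$.

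There is no real obstacle here; the only mildly delicate point is that applying the general inequality with $Y=P$ would produce the suboptimal constant $\|P\|_F = \sqrt{n(n-1)/2}$, so the tighter bound $\sqrt{n-1}$ has to be obtained by the direct support-counting argument above rather than by invoking the main inequality as a black box.
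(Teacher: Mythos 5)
Your proof of the main inequality is correct and matches the paper's argument: both expand $\|Y\odot W\|_F^2=\sum_{i,j}Y_{ij}^2W_{ij}^2$ and bound a sum of products of non-negative terms by the product of the sums (the paper labels this step ``Cauchy--Schwarz,'' but your justification via the diagonal terms of the expanded double sum is the honest one and is what the displayed inequality actually uses). Where you genuinely improve on the paper is in the three corollaries: the paper dismisses them with ``the detailed results follow from the application of inequality (\ref{le.3.1}),'' but a literal black-box application of that inequality yields only $\sqrt{n}\,\|W\|_F$, $\|\Lambda\|_F\|W\|_F$, and $\sqrt{n(n-1)/2}\,\|W\|_F$ respectively, each weaker than the constants $1$, $\|\Lambda\|_\infty$, and $\sqrt{n-1}$ claimed in the statement. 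Your direct support-counting arguments (restricting to the diagonal for $I_n$ and $\Lambda$, and to the strictly lower triangle for $P$, then enlarging $1\leqslant\sqrt{n-1}$) are exactly what is needed to obtain the stated constants, so your write-up is the more complete of the two.
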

\begin{proof}
Using the Cauchy-Schwarz inequality, it follows 
\begin{equation}\label{le.3.1}
    \| Y \odot W \|_F = \sqrt{\sum_{i,j=1}^{n} (Y_{ij} W_{ij})^2} \leqslant \sqrt{\left( \sum_{i,j=1}^{n} Y_{ij}^2 \right) \left( \sum_{i,j=1}^{n} W_{ij}^2 \right)} = \| Y \|_F \| W \|_F.
\end{equation} 
Then the detailed results follow from the application of inequality (\ref{le.3.1}).

\end{proof}

\begin{lemma}\label{le.6}
    Let $R_0 = \{ R \in \mathbb{R}^{n \times n} \mid \|R\|_F \leqslant M_0 \}$, where \(M_0\) is a nonnegative real constant. Suppose that \(A_i, B_i, \Lambda \in R_0\) for \(i = 0,\dots, n\) and that for any \(\textbf{c} \in B_M\) we have \(A(\textbf{c}), B(\textbf{c}) \in R_0\).
  Then the gradient of the objective function \(F\) with respect to \(Q\),
    \[
    \nabla_Q F: B_M \times \mathcal{O}(n) \times \mathcal{O}(n) \rightarrow \mathbb{R}^{n \times n}
    \]
 is Lipschitz continuous, for fixed \((\textbf{c}, Z) \in B_M \times \mathcal{O}(n)\). 
    That is, for any \(Q, \hat{Q} \in \mathcal{O}(n)\), there exists a constant \(L_Q > 0\) such that 
    \[
    \|\nabla_Q F(\textbf{c}, Q, Z) - \nabla_Q F(\textbf{c}, \hat{Q}, Z)\|_F \leqslant L_Q \|Q - \hat{Q}\|_F.
    \]
\end{lemma}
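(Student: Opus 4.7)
The plan is to exploit the fact that, for fixed $\textbf{c}$ and $Z$, the objective $F(\textbf{c},\cdot,Z)$ is a purely quadratic form in $Q$. Each of its three summands has the shape $\tfrac12\|M\odot(Q^\top X Z)\|_F^2$ (or the squared Frobenius norm of a linear combination of two such expressions), and every occurrence of $Q$ sits linearly inside a Frobenius norm. Consequently $\nabla_Q F(\textbf{c},\cdot,Z)$ is a linear operator on $\mathbb{R}^{n\times n}$, so that
\(\nabla_Q F(\textbf{c},Q,Z)-\nabla_Q F(\textbf{c},\hat Q,Z)\)
equals this linear operator evaluated at $Q-\hat Q$. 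Lipschitz continuity therefore reduces to a single operator-norm estimate, valid globally on $\mathbb{R}^{n\times n}$ and a fortiori on $\mathcal{O}(n)$; no mean-value theorem or compactness argument is required.

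First, I would compute $\nabla_Q F$ explicitly. Using the matrix-calculus identities $\langle X,M\odot Y\rangle_F=\langle M\odot X,Y\rangle_F$ and $\operatorname{tr}(\Psi V^\top dQ)=\langle V\Psi^\top,dQ\rangle_F$, together with the abbreviation
$\Phi_1:=\Lambda\odot(Q^\top B(\textbf{c})Z)-I_n\odot(Q^\top A(\textbf{c})Z)$
and the observation that $P\odot P=P$, one arrives at
\begin{equation*}
\nabla_Q F = B(\textbf{c})Z\bigl(\Lambda\odot\Phi_1\bigr)^{\top}-A(\textbf{c})Z\bigl(I_n\odot\Phi_1\bigr)^{\top} + A(\textbf{c})Z\bigl(P\odot(Q^\top A(\textbf{c})Z)\bigr)^{\top} + B(\textbf{c})Z\bigl(P\odot(Q^\top B(\textbf{c})Z)\bigr)^{\top}.
\end{equation*}
Each of the four summands is manifestly linear and homogeneous in $Q$, confirming the claimed linearity of the gradient in $Q$.

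Second, I would bound $\|\nabla_Q F(\textbf{c},H,Z)\|_F$ in terms of $\|H\|_F$ for an arbitrary $H\in\mathbb{R}^{n\times n}$ by combining three ingredients: submultiplicativity of the Frobenius norm, which gives $\|H^\top X Z\|_F\leqslant\|H\|_F\|X\|_F\|Z\|_F$; the Hadamard-product estimates from Lemma~\ref{le.3}, which control $\|\Lambda\odot\cdot\|_F$, $\|I_n\odot\cdot\|_F$ and $\|P\odot\cdot\|_F$; and the hypotheses $\|A(\textbf{c})\|_F,\|B(\textbf{c})\|_F,\|\Lambda\|_F\leqslant M_0$ together with $\|Z\|_F=\sqrt{n}$ from $Z\in\mathcal{O}(n)$. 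Applying these bounds term-by-term to the four summands above yields a constant $L_Q$ depending only on $n$ and $M_0$ such that $\|\nabla_Q F(\textbf{c},H,Z)\|_F\leqslant L_Q\|H\|_F$; substituting $H=Q-\hat Q$ delivers the desired estimate.

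The main obstacle, and the only non-routine step, is the careful bookkeeping in the derivation of $\nabla_Q F$: the interplay between transposes, cyclic trace manipulations and Hadamard products must be handled without error, and one must verify that the Hadamard mask with $\Lambda$ truly contributes only $\|\Lambda\|_\infty\leqslant\|\Lambda\|_F\leqslant M_0$ rather than a larger factor. Once the explicit formula is in hand, linearity of $\nabla_Q F$ in $Q$ makes the remainder of the argument a routine application of the Frobenius-norm inequalities.
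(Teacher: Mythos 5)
Your proposal is correct and follows essentially the same route as the paper: compute $\nabla_Q F$ explicitly as a sum of four terms, observe that each is linear in $Q$ so the difference $\nabla_Q F(Q)-\nabla_Q F(\hat Q)$ factors through $Q-\hat Q$, and bound each term using the Hadamard-product estimates of Lemma~\ref{le.3} together with $\|A(\textbf{c})\|_F,\|B(\textbf{c})\|_F\leqslant M_0$. Your gradient formula is the transpose-equivalent of the paper's Eq.~(\ref{tiduQ}), and the only cosmetic difference is that the paper absorbs the orthogonal factor via $\|A(\textbf{c})Z\|_F\leqslant\|A(\textbf{c})\|_F$ rather than the looser $\|Z\|_F=\sqrt{n}$, which merely changes the value of $L_Q$.
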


\begin{proof}
In the optimization model \textup{(\ref{model})}, the gradient of the objective function \(F\) with respect to \(Q\) is given by
\begin{equation}\label{tiduQ}
        \begin{aligned}
        \nabla_Q F &= A(\textbf{c}) Z \Bigl(I_n \odot \Bigl((A(\textbf{c}) Z)^{\top} Q\Bigr) - \Lambda \odot \Bigl((B(\textbf{c}) Z)^{\top} Q\Bigr)\Bigr) 
        \\ & \quad - B(\textbf{c}) Z \Bigl(\Bigl(I_n \odot \Bigl((A(\textbf{c}) Z)^{\top} Q\Bigr) - \Lambda \odot \Bigl((B(\textbf{c}) Z)^{\top} Q\Bigr)\Bigr) \odot \Lambda\Bigr)\\
        &\quad + A(\textbf{c}) Z \Bigl(P^{\top} \odot \Bigl((A(\textbf{c}) Z)^{\top} Q\Bigr)\Bigr) \\
        & \quad + B(\textbf{c}) Z \Bigl(P^{\top} \odot \Bigl((B(\textbf{c}) Z)^{\top} Q\Bigr)\Bigr)\\
        &:= \nabla_Q F_1 + \nabla_Q F_2 + \nabla_Q F_3 + \nabla_Q F_4.
    \end{aligned}
\end{equation}
    
    We consider first the term $\nabla_Q F_1$.
    For any \(Q, \hat{Q} \in \mathcal{O}(n)\), by Lemma \ref{le.3} it holds
    \begin{equation}\label{4.4.1}
    \begin{aligned}
    &\quad \ \|\nabla_Q F_1(Q) - \nabla_Q F_1(\hat{Q})\|_F \\
    & = \Bigl\| A(\textbf{c}) Z \Bigl[ \Bigl( I_n \odot \bigl((A(\textbf{c}) Z)^{\top} Q\bigr) - \Lambda \odot \bigl((B(\textbf{c}) Z)^{\top} Q\bigr) \Bigr) - \Bigl( I_n \odot \bigl((A(\textbf{c}) Z)^{\top} \hat{Q}\bigr) - \Lambda \odot \bigl((B(\textbf{c}) Z)^{\top} \hat{Q}\bigr) \Bigr) \Bigr] \Bigr\|_F \\
    & \leqslant \|A(\textbf{c}) Z\|_F \Bigl\| \Bigl( I_n \odot \bigl((A(\textbf{c}) Z)^{\top} Q\bigr) - \Lambda \odot \bigl((B(\textbf{c}) Z)^{\top} Q\bigr) \Bigr)  - \Bigl( I_n \odot \bigl((A(\textbf{c}) Z)^{\top} \hat{Q}\bigr) - \Lambda \odot \bigl((B(\textbf{c}) Z)^{\top} \hat{Q}\bigr) \Bigr) \Bigr\|_F \\
    & \leqslant \|A(\textbf{c})\|_F \Bigl( \bigl\| I_n \odot \bigl((A(\textbf{c}) Z)^{\top} (Q-\hat{Q})\bigr) \bigr\|_F  + \bigl\| \Lambda \odot \bigl((B(\textbf{c}) Z)^{\top} (Q-\hat{Q})\bigr) \bigr\|_F \Bigr) \\
    & \leqslant \|A(\textbf{c})\|_F \Bigl( \|A(\textbf{c})^{\top}\|_F \|Q-\hat{Q}\|_F 
    + \|\Lambda\|_{\infty} \|B(\textbf{c})^{\top}\|_F \|Q-\hat{Q}\|_F \Bigr) \\
    & \leqslant (M_0^2 + M_0^2 \|\Lambda\|_{\infty}) \|Q-\hat{Q}\|_F.
    \end{aligned}
    \end{equation}

Concerning the term $\nabla_Q F_2$, a similar analysis also shows, for any \(Q, \hat{Q} \in \mathcal{O}(n)\),
    \begin{equation}\label{4.4.3}
    \begin{aligned}
   \|\nabla_Q F_2(Q) - \nabla_Q F_2(\hat{Q})\|_F 
\leqslant \bigl( M_0^2 \|\Lambda\|_{\infty} + M_0^2 \|\Lambda\|_{\infty}^2 \bigr) \|Q-\hat{Q}\|_F.
    \end{aligned}
    \end{equation}
    
    In the same manner, we get
     \begin{equation}\label{4.4.3}
    \|\nabla_Q F_3(Q) - \nabla_Q F_3(\hat{Q})\|_F\leqslant M_0^2 \sqrt{n-1} \|Q-\hat{Q}\|_F,
    \end{equation}
    and
    \begin{equation}\label{4.4.4}
    \|\nabla_Q F_4(Q) - \nabla_Q F_4(\hat{Q})\|_F\leqslant M_0^2 \sqrt{n-1} \|Q-\hat{Q}\|_F.
    \end{equation}

    Thus, for any \(Q, \hat{Q} \in \mathcal{O}(n)\), combining  Eq. \eqref{4.4.1} -- Eq.\eqref{4.4.4} yields
    \[
    \begin{aligned}
    &\quad \ \|\nabla_Q F(Q) - \nabla_Q F(\hat{Q})\|_F \\
        & \leqslant \sum_{i=1}^{4}\|\nabla_Q F_i(Q) - \nabla_Q F_i(\hat{Q})\|_F  \\
    & = \Bigl( M_0^2 + 2M_0^2\|\Lambda\|_{\infty} + M_0^2\|\Lambda\|_{\infty}^2 + 2 \sqrt{n-1}M_0^2 \Bigr) \|Q-\hat{Q}\|_F \\
    & := L_Q \|Q-\hat{Q}\|_F.
    \end{aligned}
    \]
    This completes the proof.
    \end{proof}
    
    \begin{lemma}\label{le.7}
         Suppose that \(A_i, B_i, \Lambda \in R_0\) for \(i = 0,\dots, n\) and that for any \(\textbf{c} \in B_M\) we have \(A(\textbf{c}), B(\textbf{c}) \in R_0\).
        Then the function
        \[
        \nabla_Z F: B_M \times \mathcal{O}(n) \times \mathcal{O}(n) \rightarrow \mathbb{R}^{n \times n},
        \]
        where $\nabla_Z F$ denotes the gradient of the objective function \(F\) in Eq.(\ref{model}) with respect to \(Q\), is Lipschitz continuous with respect to \(Z \in \mathcal{O}(n)\) for fixed \((\textbf{c}, Q) \in B_M \times \mathcal{O}(n)\). That is, for any \(Z, \hat{Z} \in \mathcal{O}(n)\), there exists a constant \(L_Z > 0\) such that 
        \[
        \|\nabla_Z F(\textbf{c}, Q, Z) - \nabla_Z F(\textbf{c}, Q, \hat{Z})\|_F \leqslant L_Z \|Z - \hat{Z}\|_F.
        \]
    \end{lemma}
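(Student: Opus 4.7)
The plan is to mirror the argument of Lemma \ref{le.6}, exchanging the roles of $Q$ and $Z$. The starting point is to compute $\nabla_Z F$ explicitly by differentiating each of the three squared-Frobenius-norm terms in Eq.(\ref{model}) with respect to $Z$. Using the matrix-calculus identity $\nabla_Z \tfrac{1}{2}\|\mathcal{M} \odot (VZ)\|_F^2 = V^\top\bigl((\mathcal{M} \odot \mathcal{M}) \odot (VZ)\bigr)$, applied with $V = Q^\top A(\textbf{c})$ or $V = Q^\top B(\textbf{c})$, one obtains a decomposition
\[
\nabla_Z F = \nabla_Z F_1 + \nabla_Z F_2 + \nabla_Z F_3 + \nabla_Z F_4,
\]
entirely analogous to Eq.(\ref{tiduQ}). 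Each $\nabla_Z F_i$ has the form (outer factor) $\times$ (Hadamard product in $Z$), where the outer factor is either $A(\textbf{c})^\top Q$ or $B(\textbf{c})^\top Q$. The only formal difference from Eq.(\ref{tiduQ}) is that the transpose $P^\top$ appearing in $\nabla_Q F_3$ and $\nabla_Q F_4$ is replaced by $P$ itself in the $Z$-gradient, since here $Z$ sits on the right of the matrix product whose squared norm is being differentiated; this substitution has no effect on the subsequent bounds.

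Next, for each $i \in \{1,2,3,4\}$ I would estimate $\|\nabla_Z F_i(Z) - \nabla_Z F_i(\hat Z)\|_F$ along the exact pattern of Eqs. (\ref{4.4.1})--(\ref{4.4.4}). The key ingredients are: (i) the outer factors $A(\textbf{c})^\top Q$ and $B(\textbf{c})^\top Q$ have Frobenius norm at most $M_0$, since multiplication by an orthogonal matrix preserves the Frobenius norm, giving $\|A(\textbf{c})^\top Q\|_F = \|A(\textbf{c})\|_F \leqslant M_0$ and analogously for $B(\textbf{c})$; (ii) the inner differences take the form $\mathcal{M} \odot (V(Z-\hat Z))$ with $V \in \{Q^\top A(\textbf{c}),\, Q^\top B(\textbf{c})\}$ and $\mathcal{M} \in \{I_n,\Lambda,P\}$, to which Lemma \ref{le.3} applies with the same constants $1$, $\|\Lambda\|_\infty$, and $\sqrt{n-1}$ used in Lemma \ref{le.6}; (iii) submultiplicativity of the Frobenius norm then factors $\|Z-\hat Z\|_F$ cleanly out of every bound.

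Summing the four estimates gives
\[
\|\nabla_Z F(\textbf{c},Q,Z) - \nabla_Z F(\textbf{c},Q,\hat Z)\|_F \leqslant \bigl(M_0^2 + 2M_0^2\|\Lambda\|_\infty + M_0^2\|\Lambda\|_\infty^2 + 2\sqrt{n-1}\, M_0^2\bigr) \|Z - \hat Z\|_F,
\]
so one can take $L_Z$ equal to the constant in parentheses, which numerically coincides with the constant $L_Q$ obtained in Lemma \ref{le.6}. The only substantive technical point—really the sole obstacle beyond routine bookkeeping—is correctly deriving the formula for $\nabla_Z F$, in particular tracking which mask matrix carries a transpose and on which side each factor of $Q$, $A(\textbf{c})$, $B(\textbf{c})$ appears; once this formula is in place, the four per-term bounds follow by direct imitation of Eqs. (\ref{4.4.1})--(\ref{4.4.4}).
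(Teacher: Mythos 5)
Your proposal is correct and follows exactly the route the paper intends: the paper's own proof of Lemma \ref{le.7} is a one-line remark that it ``follows a similar approach to that of Lemma \ref{le.6},'' and your plan simply carries out that symmetry, with the right gradient formula (mask $P$ rather than $P^{\top}$ on the $Z$-side), the correct use of orthogonal invariance of the Frobenius norm, and the same resulting constant $L_Z = L_Q$.
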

\begin{proof}
The proof follows a similar approach to that of Lemma \ref{le.6}.   
\end{proof}

\begin{lemma}\label{le.8}
    Suppose that \(A_i, B_i, \Lambda \in R_0\) for \(i = 0,\dots, n\) and that for any \(\textbf{c} \in B_M\) it holds \(A(\textbf{c}), B(\textbf{c}) \in R_0\).
    Then, the gradient of the objective function \(F\) in Eq.(\ref{model}) with respect to the vector \(\textbf{c}\),
    \[
    \nabla_{\textbf{c}} F: B_M \times \mathcal{O}(n) \times \mathcal{O}(n) \rightarrow \mathbb{R}^{n}
    \]
  is Lipschitz continuous with respect to \(\textbf{c} \in B_M\) for fixed \((Z, Q) \in \mathcal{O}(n) \times \mathcal{O}(n)\). That is, for any \(\textbf{c}, \hat{\textbf{c}} \in B_M\), there exists a constant \(L_{\textbf{c}} > 0\) such that 
    \[
    \|\nabla_{\textbf{c}} F(\textbf{c}, Q, Z) - \nabla_{\textbf{c}} F(\hat{\textbf{c}}, Q, Z)\|_2 \leqslant L_{\textbf{c}} \|\textbf{c} - \hat{\textbf{c}}\|_2.
    \]
\end{lemma}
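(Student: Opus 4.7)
The plan is to compute $\nabla_{\textbf{c}} F$ explicitly and then exploit the key structural fact that, because $A(\textbf{c})$ and $B(\textbf{c})$ are affine in $\textbf{c}$, the objective $F$ is a polynomial of degree two in $\textbf{c}$ for fixed $(Q,Z)$. Consequently $\nabla_{\textbf{c}} F$ is \emph{affine} in $\textbf{c}$, so Lipschitz continuity reduces to bounding the constant coefficient multiplying $(\textbf{c}-\hat{\textbf{c}})$, which will follow from Lemma \ref{le.3} together with the uniform bounds $\|A_i\|_F,\|B_i\|_F \leqslant M_0$ and orthogonality of $Q,Z$.

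First I would compute the $k$-th partial derivative. Writing $G(\textbf{c}) := \Lambda\odot(Q^\top B(\textbf{c})Z) - I_n\odot(Q^\top A(\textbf{c})Z)$ and $G_k := \Lambda\odot(Q^\top B_k Z) - I_n\odot(Q^\top A_k Z)$, and using $\partial_{c_k} A(\textbf{c})=A_k$, $\partial_{c_k} B(\textbf{c})=B_k$, the chain rule applied to the three Frobenius-norm-squared terms in \eqref{model} yields
\begin{equation*}
\frac{\partial F}{\partial c_k} = \langle G(\textbf{c}), G_k\rangle_F + \langle P\odot(Q^\top A(\textbf{c})Z),\, P\odot(Q^\top A_k Z)\rangle_F + \langle P\odot(Q^\top B(\textbf{c})Z),\, P\odot(Q^\top B_k Z)\rangle_F.
\end{equation*}
Each term is affine in $\textbf{c}$, so forming the difference and using $A(\textbf{c})-A(\hat{\textbf{c}}) = \sum_i (c_i-\hat c_i)A_i$, $B(\textbf{c})-B(\hat{\textbf{c}}) = \sum_i(c_i-\hat c_i)B_i$, and $G(\textbf{c})-G(\hat{\textbf{c}}) = \sum_i(c_i-\hat c_i)G_i$, gives
\begin{equation*}
\frac{\partial F}{\partial c_k}(\textbf{c}) - \frac{\partial F}{\partial c_k}(\hat{\textbf{c}}) = \sum_{i=1}^n (c_i-\hat c_i)\, J_{ki},
\end{equation*}
where $J_{ki} = \langle G_i,G_k\rangle_F + \langle P\odot(Q^\top A_i Z),P\odot(Q^\top A_k Z)\rangle_F + \langle P\odot(Q^\top B_i Z),P\odot(Q^\top B_k Z)\rangle_F$ is independent of $\textbf{c}$.

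Next I would bound $|J_{ki}|$ uniformly. By Cauchy--Schwarz at the Frobenius inner product level and Lemma \ref{le.3}, each Hadamard factor satisfies $\|\Lambda\odot(Q^\top B_i Z)\|_F\leqslant \|\Lambda\|_\infty \|Q^\top B_i Z\|_F$, $\|I_n\odot(Q^\top A_i Z)\|_F\leqslant \|Q^\top A_i Z\|_F$, and $\|P\odot(Q^\top X_i Z)\|_F\leqslant \sqrt{n-1}\|Q^\top X_i Z\|_F$. Using $\|Q^\top X Z\|_F\leqslant \|Q\|_F\|X\|_F\|Z\|_F$ (or, more sharply, $\|Q^\top X Z\|_F=\|X\|_F$ since $Q,Z$ are orthogonal) and $\|A_i\|_F,\|B_i\|_F\leqslant M_0$, one obtains a uniform bound $|J_{ki}|\leqslant C(M_0,\|\Lambda\|_\infty,n)$. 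A second application of Cauchy--Schwarz in the index $i$, followed by summing the squares over $k$, converts the componentwise estimate into the $\ell^2$ bound
\begin{equation*}
\|\nabla_{\textbf{c}}F(\textbf{c},Q,Z)-\nabla_{\textbf{c}}F(\hat{\textbf{c}},Q,Z)\|_2 \leqslant \|J\|_F\,\|\textbf{c}-\hat{\textbf{c}}\|_2 =: L_{\textbf{c}}\,\|\textbf{c}-\hat{\textbf{c}}\|_2.
\end{equation*}

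The main obstacle is not analytic but organizational: the bookkeeping of the three Hadamard-product terms and producing an explicit constant $L_{\textbf{c}}$. The affine dependence of $\nabla_{\textbf{c}} F$ on $\textbf{c}$ makes Lipschitz continuity essentially automatic, so the argument is strictly easier than those of Lemmas \ref{le.6} and \ref{le.7}; the only subtle step is passing from the pointwise bounds on $|J_{ki}|$ to the operator-norm Lipschitz constant via a double Cauchy--Schwarz, which must not be conflated with a single scalar estimate.
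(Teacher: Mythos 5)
Your proposal is correct and follows essentially the same route as the paper: both compute the three partial derivatives explicitly, exploit the affine dependence of $A(\textbf{c})$, $B(\textbf{c})$ (hence of $\nabla_{\textbf{c}}F$) on $\textbf{c}$, and close the estimate with Cauchy--Schwarz plus Lemma \ref{le.3} and the bounds $\|A_i\|_F,\|B_i\|_F\leqslant M_0$. The only cosmetic difference is that you package the difference of gradients as a constant matrix $J$ acting on $\textbf{c}-\hat{\textbf{c}}$ and bound $\|J\|_F$, whereas the paper bounds $\|\Delta A\|_F,\|\Delta B\|_F$ by $\sqrt{n}M_0\|\textbf{c}-\hat{\textbf{c}}\|_2$ directly and then passes to the $\ell^2$ norm via the maximal component times $\sqrt{n}$.
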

\begin{proof}
We first compute the gradient of the objective function \(F\) with respect to the vector \(\textbf{c}\).
Define \( M_1(\textbf{c}) = I_n \odot (Q^\top A(\textbf{c}) Z) - \Lambda \odot (Q^\top B(\textbf{c}) Z) \).
Then a direct computation gives
    \[
    \frac{\partial A(\textbf{c})}{\partial c_k} = A_k, \quad \frac{\partial B(\textbf{c})}{\partial c_k} = B_k,\quad \text{and} \quad \frac{\partial M_{1_{ij}}}{\partial c_k} = \Bigl( I_n \odot (Q^{\top} A_k Z) - \Lambda \odot (Q^{\top} B_k Z) \Bigr)_{ij},\ k=1,\dots,n.
    \]
    Calculating the derivative of 
    \[
    f_1(\textbf{c}) := \frac{1}{2} \Bigl\| I_n \odot \bigl(Q^{\top} A(\textbf{c}) Z\bigr) - \Lambda \odot \bigl(Q^{\top} B(\textbf{c}) Z\bigr) \Bigr\|_F^2
    \]
    with respect to \(c_k\), one has
    \[
    \begin{aligned}
    \frac{\partial f_1(\textbf{c})}{\partial c_k} 
    &= \sum_{i,j} M_{1_{ij}} \frac{\partial M_{1_{ij}}}{\partial c_k} \\
    &= \sum_{i,j} M_{1_{ij}} \Bigl( I_n \odot (Q^{\top} A_k Z) - \Lambda \odot (Q^{\top} B_k Z) \Bigr)_{ij} \\
    &= \Bigl\langle I_n \odot (Q^{\top} A(\textbf{c}) Z) - \Lambda \odot (Q^{\top} B(\textbf{c}) Z), \; I_n \odot (Q^{\top} A_k Z) - \Lambda \odot (Q^{\top} B_k Z) \Bigr\rangle,
    \end{aligned}
    \]
    with \(\langle \cdot, \cdot \rangle\) denoting the inner product of matrices.
    Denoting by \( M_2(\textbf{c}) = P \odot (Q^{\top} A(\textbf{c}) Z) \), one then has
    \[
    \frac{\partial M_{2_{ij}}}{\partial c_k} = \Bigl( P \odot (Q^{\top} A_k Z) \Bigr)_{ij}.
    \]
    The derivative of 
    \[
    f_2(\textbf{c}) := \frac{1}{2} \Bigl\| P \odot \bigl(Q^{\top} A(\textbf{c}) Z\bigr) \Bigr\|_F^2
    \]
    in \(c_k\) is then obtained as
    \[
    \begin{aligned}
    \frac{\partial f_2(\textbf{c})}{\partial c_k} 
    &= \sum_{i,j} M_{2_{ij}} \frac{\partial M_{2_{ij}}}{\partial c_k} \\
    &= \sum_{i,j} \Bigl( P \odot (Q^{\top} A(\textbf{c}) Z) \Bigr)_{ij} \Bigl( P \odot (Q^{\top} A_k Z) \Bigr)_{ij} \\
    &= \Bigl\langle P \odot (Q^{\top} A(\textbf{c}) Z), \; P \odot (Q^{\top} A_k Z) \Bigr\rangle.
    \end{aligned}
    \]
    Similarly, denoting by \( M_3(\textbf{c}) = P \odot (Q^{\top} B(\textbf{c}) Z) \),
    for $f_3(\textbf{c}) := \frac{1}{2} \Bigl\| P \odot \bigl(Q^{\top} B(\textbf{c}) Z\bigr) \Bigr\|_F^2$, it holds
    \[
    \begin{aligned}
    \frac{\partial f_3(\textbf{c})}{\partial c_k} 
        &= \sum_{i,j} M_{3_{ij}} \frac{\partial M_{3_{ij}}}{\partial c_k} \\
    &= \sum_{i,j} \Bigl( P \odot (Q^{\top} B(\textbf{c}) Z) \Bigr)_{ij} \Bigl( P \odot (Q^{\top} B_k Z) \Bigr)_{ij} \\
    &= \Bigl\langle P \odot (Q^{\top} B(\textbf{c}) Z), \; P \odot (Q^{\top} B_k Z) \Bigr\rangle.
    \end{aligned}
    \]
    Combining the above results, the gradient vector is given by
    \[
    \begin{aligned}
    \nabla_{\textbf{c}} F &= \Bigl( \frac{\partial f_1(\textbf{c})}{\partial c_1} + \frac{\partial f_2(\textbf{c})}{\partial c_1} + \frac{\partial f_3(\textbf{c})}{\partial c_1},\; \cdots,\; \frac{\partial f_1(\textbf{c})}{\partial c_n} + \frac{\partial f_2(\textbf{c})}{\partial c_n} + \frac{\partial f_3(\textbf{c})}{\partial c_n} \Bigr)^{\top} := \Bigl( \frac{\partial f}{\partial c_1}, \cdots, \frac{\partial f}{\partial c_n} \Bigr)^{\top}.
    \end{aligned}
    \]
    For any \(\textbf{c}, \hat{\textbf{c}} \in B_M\), we have
    \begin{equation}\label{cc}
    \nabla_{\textbf{c}} F(\textbf{c},Q,Z) - \nabla_{\textbf{c}} F(\hat{\textbf{c}},Q,Z) 
    = \left(\frac{\partial f}{\partial c_1} (\textbf{c}) - \frac{\partial f}{\partial c_1} (\hat{\textbf{c}}),\, \cdots,\, \frac{\partial f}{\partial c_n} (\textbf{c}) - \frac{\partial f}{\partial c_n} (\hat{\textbf{c}})\right)^{\top}.
    \end{equation}
    The \(k\)th component of Eq. (\ref{cc}) is given by
    \begin{equation}\label{fc}
        \begin{aligned}
    \frac{\partial f}{\partial c_k} (\textbf{c}) - \frac{\partial f}{\partial c_k} (\hat{\textbf{c}})
    & = \Bigl\langle M_1(\textbf{c}) - M_1(\hat{\textbf{c}}),\, I_n \odot \bigl(Q^{\top} A_k Z\bigr) - \Lambda \odot \bigl(Q^{\top} B_k Z\bigr) \Bigr\rangle  + \Bigl\langle M_2(\textbf{c}) - M_2(\hat{\textbf{c}}),\, P \odot \bigl(Q^{\top} A_k Z\bigr) \Bigr\rangle \\
    & \quad + \Bigl\langle M_3(\textbf{c}) - M_3(\hat{\textbf{c}}),\, P \odot \bigl(Q^{\top} B_k Z\bigr) \Bigr\rangle,
        \end{aligned}
    \end{equation}
    where 
    \[
    \begin{aligned}
    &M_1(\textbf{c}) - M_1(\hat{\textbf{c}}) = I_n \odot \bigl(Q^{\top} (\Delta A) Z\bigr) - \Lambda \odot \bigl(Q^{\top} (\Delta B) Z\bigr):=\Delta M_1, \\
    &M_2(\textbf{c}) - M_2(\hat{\textbf{c}}) = P \odot \bigl(Q^{\top} (A(\textbf{c}) - A(\hat{\textbf{c}})) Z\bigr)
    = P \odot \bigl(Q^{\top} (\Delta A) Z\bigr):=\Delta M_2, \\
   &M_3(\textbf{c}) - M_3(\hat{\textbf{c}}) = P \odot \bigl(Q^{\top} (B(\textbf{c}) - B(\hat{\textbf{c}})) Z\bigr)
    = P \odot \bigl(Q^{\top} (\Delta B) Z\bigr):=\Delta M_3,
    \end{aligned}
    \]
    with
    \[
    \Delta A = A(\textbf{c}) - A(\hat{\textbf{c}}) \quad \text{and} \quad \Delta B = B(\textbf{c}) - B(\hat{\textbf{c}}).
    \]
    Moreover, by the Cauchy-Schwartz inequality we have
    \begin{equation}\label{M123}
    \begin{aligned}
    \Bigl|\Bigl\langle \Delta M_1,\, I_n \odot (Q^{\top} A_k Z) - \Lambda \odot (Q^{\top} B_k Z) \Bigr\rangle\Bigr|
    &\leqslant \|\Delta M_1\|_F \, \|I_n \odot (Q^{\top} A_k Z) - \Lambda \odot (Q^{\top} B_k Z)\|_F, \\
    \Bigl|\Bigl\langle \Delta M_2,\, P \odot (Q^{\top} A_k Z) \Bigr\rangle\Bigr|
    &\leqslant \|\Delta M_2\|_F \, \|P \odot (Q^{\top} A_k Z)\|_F, \\
    \Bigl|\Bigl\langle \Delta M_3,\, P \odot (Q^{\top} B_k Z) \Bigr\rangle\Bigr|
    &\leqslant \|\Delta M_3\|_F \, \|P \odot (Q^{\top} B_k Z)\|_F.
    \end{aligned}
    \end{equation}
    Thus, by Eq. (\ref{fc}) and Eq. (\ref{M123}) we obtain
    \begin{equation}\label{th4.2.1}
        \begin{aligned}
    \left|\frac{\partial f}{\partial c_k} (\textbf{c}) - \frac{\partial f}{\partial c_k} (\hat{\textbf{c}})\right|
    &\leqslant \|\Delta M_1\|_F\, \|I_n \odot (Q^{\top} A_k Z) - \Lambda \odot (Q^{\top} B_k Z)\|_F  + \|\Delta M_2\|_F\, \|P \odot (Q^{\top} A_k Z)\|_F \\
    &\quad + \|\Delta M_3\|_F\, \|P \odot (Q^{\top} B_k Z)\|_F.
        \end{aligned}
    \end{equation}
    Since
    \[
    \|\Delta A\|_F \leqslant \sqrt{n}M_0 \|\textbf{c} - \hat{\textbf{c}}\|_2 \quad \text{and} \quad \|\Delta B\|_F \leqslant \sqrt{n}M_0 \|\textbf{c} - \hat{\textbf{c}}\|_2,
    \]
    and by Lemma \ref{le.3} it follows
    \begin{equation}\label{th4.2.2}
        \begin{aligned}
    \|\Delta M_1\|_F &\leqslant \|\Delta A\|_F + \|\Lambda\|_{\infty} \|\Delta B\|_F
    \leqslant \left(\sqrt{n}M_0 + \sqrt{n}M_0\|\Lambda\|_{\infty}\right) \|\textbf{c} - \hat{\textbf{c}}\|_2, \\
    \|\Delta M_2\|_F &= \|P \odot (Q^{\top} \Delta A\, Z)\|_F \leqslant \sqrt{n}M_0 \sqrt{n-1} \|\textbf{c} - \hat{\textbf{c}}\|_2, \\
    \|\Delta M_3\|_F &= \|P \odot (Q^{\top} \Delta B\, Z)\|_F \leqslant \sqrt{n}M_0 \sqrt{n-1} \|\textbf{c} - \hat{\textbf{c}}\|_2.
        \end{aligned}
    \end{equation}
Denote by \(c_k\) the component of $\mathbf{c}$ such that
\[
\left| \frac{\partial f}{\partial c_k}(\mathbf{c}) - \frac{\partial f}{\partial c_k}(\hat{\mathbf{c}}) \right|=\max_{1 \leqslant i \leqslant n} \left| \frac{\partial f}{\partial c_i}(\mathbf{c}) - \frac{\partial f}{\partial c_i}(\hat{\mathbf{c}}) \right|.
\]  
    Then, by \eqref{th4.2.1} and \eqref{th4.2.2}, for any \(\textbf{c}, \hat{\textbf{c}} \in B_M\), we have
    \[
    \begin{aligned}
    &\quad \ \|\nabla_{\textbf{c}} F(\textbf{c}) - \nabla_{\textbf{c}} F(\hat{\textbf{c}})\|_2\\
    &\leqslant \sqrt{n} \|\Delta M_1\|_F \|I_n \odot (Q^{\top} A_k Z) - \Lambda \odot (Q^{\top} B_k Z)\|_F  + \sqrt{n}\|\Delta M_2\|_F \|P \odot (Q^{\top} A_k Z)\|_F  \\
    &\quad \ + \sqrt{n}\|\Delta M_3\|_F \|P \odot (Q^{\top} B_k Z)\|_F \\
    &\leqslant \sqrt{n} \Bigl( \|\Delta M_1\|_F \bigl(\|A_k\|_F + \|\Lambda\|_{\infty} \|B_k\|_F\bigr) + \|\Delta M_2\|_F \sqrt{n-1} \|A_k\|_F  + \|\Delta M_3\|_F \sqrt{n-1} \|B_k\|_F \Bigr) \\
    &\leqslant \sqrt{n} \Bigl( \bigl(\sqrt{n}M_0 + \sqrt{n}M_0 \|\Lambda\|_{\infty}\bigr)(M_0 + M_0 \|\Lambda\|_{\infty}) + \sqrt{n}M_0 \sqrt{n-1} M_0 + \sqrt{n}M_0\sqrt{n-1} M_0 \Bigr) \|\textbf{c} - \hat{\textbf{c}}\|_2 \\
    &= n\Bigl((M_0 + M_0 \|\Lambda\|_{\infty})^2 + 2M_0^2 \sqrt{n-1}\Bigr) \|\textbf{c} - \hat{\textbf{c}}\|_2 \\
    &:= L_{\textbf{c}} \|\textbf{c} - \hat{\textbf{c}}\|_2.
    \end{aligned}
    \]
    This completes the proof.
    \end{proof}
    \begin{theorem}\label{th1}
        If \(A_i, B_i, \Lambda \in R_0\) for \(i = 0,1,\dots, n\) and for any \(\textbf{c} \in B_M\) it holds \(A(\textbf{c}), B(\textbf{c}) \in R_0\), then the objective function \(F\) in Eq.(\ref{model}) defined on \(\bar{\Omega}\) is gradient Lipschitz continuous. That is, there exists a constant \(L > 0\) such that for any \((\textbf{c}, Q, Z), (\hat{\textbf{c}}, \hat{Q}, \hat{Z}) \in \bar{\Omega}\) it holds that
        \begin{equation*}
        \|\nabla F(\textbf{c}, Q, Z) - \nabla F(\hat{\textbf{c}}, \hat{Q}, \hat{Z})\|_F \leqslant L\Bigl( \|Q-\hat{Q}\|_F + \|Z-\hat{Z}\|_F + \|\textbf{c}-\hat{\textbf{c}}\|_2 \Bigr).
        \end{equation*}
    \end{theorem}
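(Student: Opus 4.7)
The plan is to derive the joint Lipschitz estimate by a standard ``slot-by-slot'' triangle inequality argument, built on top of the three partial Lipschitz lemmas already established (Lemmas~\ref{le.6}, \ref{le.7}, \ref{le.8}). Writing the full gradient as the triple $\nabla F = (\nabla_{\textbf{c}}F,\nabla_Q F,\nabla_Z F)$, one has
\[
\|\nabla F(\textbf{c},Q,Z)-\nabla F(\hat{\textbf{c}},\hat{Q},\hat{Z})\|_F
\leqslant \|\nabla_{\textbf{c}} F(\cdot)-\nabla_{\textbf{c}} F(\cdot)\|_2
+\|\nabla_Q F(\cdot)-\nabla_Q F(\cdot)\|_F
+\|\nabla_Z F(\cdot)-\nabla_Z F(\cdot)\|_F,
\]
so it suffices to bound each of the three components individually by a constant multiple of $\|\textbf{c}-\hat{\textbf{c}}\|_2+\|Q-\hat{Q}\|_F+\|Z-\hat{Z}\|_F$.

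For each component, I would insert two intermediate points and split the difference into three ``one-variable-at-a-time'' pieces. For instance, for the $Q$-component:
\[
\nabla_Q F(\textbf{c},Q,Z)-\nabla_Q F(\hat{\textbf{c}},\hat{Q},\hat{Z})
=\bigl[\nabla_Q F(\textbf{c},Q,Z)-\nabla_Q F(\textbf{c},\hat{Q},Z)\bigr]
+\bigl[\nabla_Q F(\textbf{c},\hat{Q},Z)-\nabla_Q F(\textbf{c},\hat{Q},\hat{Z})\bigr]
+\bigl[\nabla_Q F(\textbf{c},\hat{Q},\hat{Z})-\nabla_Q F(\hat{\textbf{c}},\hat{Q},\hat{Z})\bigr],
\]
and analogously for the $Z$- and $\textbf{c}$-components. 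The first term in the $Q$ decomposition is handled directly by Lemma~\ref{le.6}. For the remaining ``off-diagonal'' pieces (e.g.\ $\nabla_Q F$ as a function of $Z$ or of $\textbf{c}$, $\nabla_Z F$ as a function of $Q$ or of $\textbf{c}$, and $\nabla_{\textbf{c}} F$ as a function of $Q$ or of $Z$), I would argue that the same proof template used in Lemmas~\ref{le.6}--\ref{le.8} applies: each partial gradient is a sum of terms multilinear in $A(\textbf{c}),B(\textbf{c}),Q,Z$, and the Hadamard bounds of Lemma~\ref{le.3} together with the uniform bounds $\|A(\textbf{c})\|_F,\|B(\textbf{c})\|_F\leqslant M_0$ and $\|Q\|_F=\|Z\|_F=\sqrt{n}$ on the compact set $\bar{\Omega}$ yield Lipschitz estimates with constants of the same flavor. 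For the $\textbf{c}$-slot in particular, one uses $\|\Delta A\|_F,\|\Delta B\|_F\leqslant \sqrt{n}M_0\|\textbf{c}-\hat{\textbf{c}}\|_2$, exactly as in the proof of Lemma~\ref{le.8}.

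Once these nine pairwise bounds are in hand (three component gradients, each Lipschitz in each of the three slots with its own constant), I would let $L$ be the sum of all nine constants and assemble:
\[
\|\nabla F(\textbf{c},Q,Z)-\nabla F(\hat{\textbf{c}},\hat{Q},\hat{Z})\|_F
\leqslant L\bigl(\|Q-\hat{Q}\|_F+\|Z-\hat{Z}\|_F+\|\textbf{c}-\hat{\textbf{c}}\|_2\bigr),
\]
which is the desired inequality.

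The conceptual content is minimal; the main obstacle is purely computational: verifying the six ``off-diagonal'' Lipschitz bounds that are not stated explicitly in Lemmas~\ref{le.6}--\ref{le.8}. These are genuinely routine, since the explicit expression \eqref{tiduQ} for $\nabla_Q F$ (and its analogues for $\nabla_Z F$ and $\nabla_{\textbf{c}} F$) depends polynomially, in fact multilinearly, on each of the slot variables, and compactness of $\bar{\Omega}$ turns each such multilinear piece into a Lipschitz map. The only care needed is to keep track of all constant factors ($M_0$, $\|\Lambda\|_\infty$, $\sqrt{n-1}$, $\sqrt{n}$) so that the final $L$ is a finite, explicit constant, which the authors can then quote when invoking the convergence theory of Adam in the next section.
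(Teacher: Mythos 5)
Your proposal is correct and follows the same basic skeleton as the paper's proof: a triangle inequality through intermediate points, reducing the joint estimate to one-variable-at-a-time Lipschitz bounds supplied by Lemmas~\ref{le.6}--\ref{le.8}. The difference is that your version is actually \emph{more} careful than the paper's. The paper inserts only two intermediate points and then asserts, in passing from its first display to its second, that
\[
\|\nabla F(\textbf{c},Q,Z)-\nabla F(\hat{\textbf{c}},Q,Z)\|_F=\|\nabla_{\textbf{c}}F(\textbf{c},Q,Z)-\nabla_{\textbf{c}}F(\hat{\textbf{c}},Q,Z)\|_F,
\]
and analogously for the $Q$- and $Z$-increments. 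This silently discards the fact that the full gradient is the triple $(\nabla_{\textbf{c}}F,\nabla_QF,\nabla_ZF)$ and that, e.g., $\nabla_QF$ in \eqref{tiduQ} depends on $\textbf{c}$ through $A(\textbf{c})$ and $B(\textbf{c})$, so varying $\textbf{c}$ alone perturbs all three components, not just $\nabla_{\textbf{c}}F$. You identify exactly these six ``off-diagonal'' cross terms as the missing ingredient and give the right reason they are controllable: each partial gradient is polynomial (affine or quadratic) in each slot variable, and on the compact set $\bar{\Omega}$ the uniform bounds $\|A(\textbf{c})\|_F,\|B(\textbf{c})\|_F\leqslant M_0$, $\|Q\|_F=\|Z\|_F=\sqrt{n}$ together with Lemma~\ref{le.3} turn these into explicit Lipschitz constants, just as in the proofs of Lemmas~\ref{le.6}--\ref{le.8}. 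So your argument fills a genuine (if routine) gap in the published proof; the only thing left to make it fully rigorous is to actually write out those six estimates, which, as you say, is mechanical. One minor point: your very first display bounding the norm of the triple by the sum of the three component norms deserves a word of justification (it is the standard $\ell^2$-versus-$\ell^1$ comparison for a block vector), but this is harmless.
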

    \begin{proof}
        For any \((\textbf{c}, Q, Z), (\hat{\textbf{c}}, \hat{Q}, \hat{Z}) \in \bar{\Omega}\), we have
        \[
        \begin{aligned}
        \|\nabla F(\textbf{c}, Q, Z) - \nabla F(\hat{\textbf{c}}, \hat{Q}, \hat{Z})\|_F 
        &\leqslant \|\nabla F(\textbf{c}, Q, Z) - \nabla F(\hat{\textbf{c}}, Q, Z)\|_F  + \|\nabla F(\hat{\textbf{c}}, Q, Z) - \nabla F(\hat{\textbf{c}}, \hat{Q}, Z)\|_F \\
        &\quad + \|\nabla F(\hat{\textbf{c}}, \hat{Q}, Z) - \nabla F(\hat{\textbf{c}}, \hat{Q}, \hat{Z})\|_F.
        \end{aligned}
        \]
        That is,
        \[
        \begin{aligned}
        \|\nabla F(\textbf{c}, Q, Z) - \nabla F(\hat{\textbf{c}}, \hat{Q}, \hat{Z})\|_F 
        &\leqslant \|\nabla_{\textbf{c}} F(\textbf{c}, Q, Z) - \nabla_{\textbf{c}} F(\hat{\textbf{c}}, Q, Z)\|_F  + \|\nabla_Q F(\hat{\textbf{c}}, Q, Z) - \nabla_Q F(\hat{\textbf{c}}, \hat{Q}, Z)\|_F \\
        &\quad + \|\nabla_Z F(\hat{\textbf{c}}, \hat{Q}, Z) - \nabla_Z F(\hat{\textbf{c}}, \hat{Q}, \hat{Z})\|_F.
        \end{aligned}
        \]
        By Lemmas \ref{le.6}, \ref{le.7} and \ref{le.8}, we obtain
        \[
        \|\nabla F(\textbf{c}, Q, Z) - \nabla F(\hat{\textbf{c}}, \hat{Q}, \hat{Z})\|_F \leqslant L_{\textbf{c}} \|\textbf{c} - \hat{\textbf{c}}\|_2 + L_Q \|Q - \hat{Q}\|_F + L_Z \|Z - \hat{Z}\|_F.
        \]
        Defining
        \[
        L = \max\{L_{\textbf{c}}, L_Q, L_Z\},
        \]
        it follows that
        \[
        \|\nabla F(\textbf{c}, Q, Z) - \nabla F(\hat{\textbf{c}}, \hat{Q}, \hat{Z})\|_F \leqslant L \Bigl( \|\textbf{c} - \hat{\textbf{c}}\|_2 + \|Q - \hat{Q}\|_F + \|Z - \hat{Z}\|_F \Bigr).
        \]
        This completes the proof.
        \end{proof}
               
Theorem \ref{th1} establishes the Lipschitz continuity of \(\nabla F\) over \(\bar{\Omega}\). 
Specifically, the Lipschitz continuity ensures stable variations of the objective function’s gradient.
Based on this theoretical result, model (\ref{model}) can be effectively solved by a variety of gradient-based algorithms.
Meanwhile, Theorem \ref{th1} also serves as a prerequisite assumption for the convergence of Adam algorithm\cite{chen2018convergence,gadat2022asymptotic,kavis2022high}.
The result of Theorem \ref{th1} holds as well for the objective function in Eq.(\ref{model2}) , and the proof is therefore omitted.
\begin{figure}[h]
    \centering
    \begin{subfigure}[b]{1.0\textwidth}
           \centering
           \includegraphics[width=\textwidth]{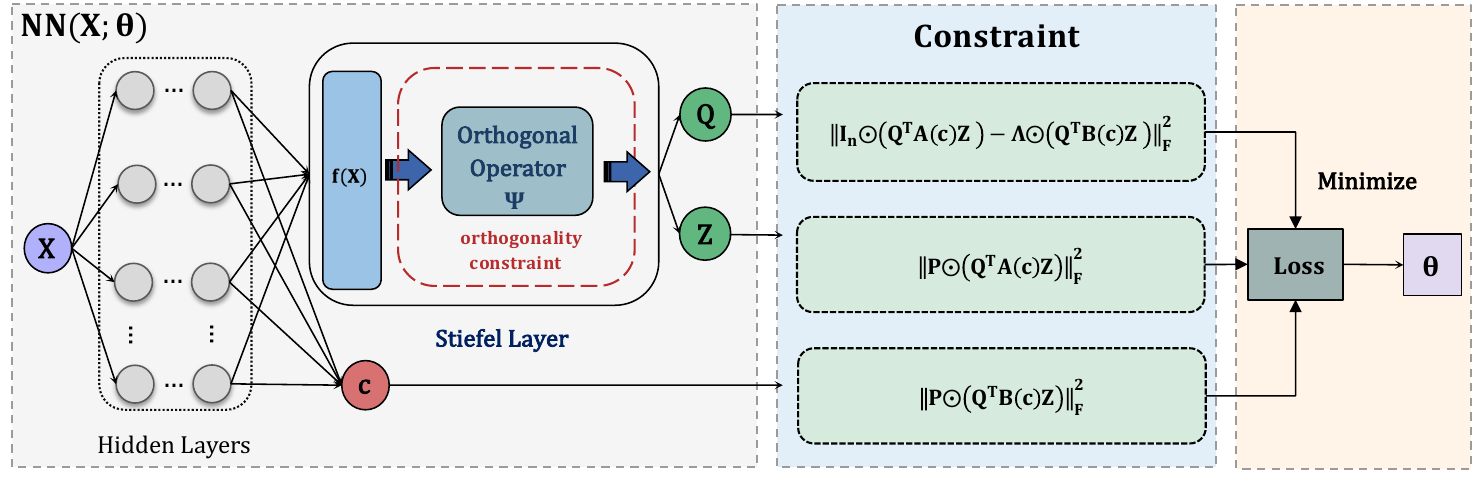}
       \end{subfigure}
    \caption{Schematic diagram of the P-SMLP architecture. In the Stiefel Layer, the output features from the previous layer are first linearly combined to generate new feature representations, then processed by the orthogonal operator $\Psi$ to strictly satisfy the orthogonality constraints on the Stiefel manifold, producing the output $Q$ and $Z$.}
    \label{PSMLP}
  \end{figure}
\subsection{Orthogonality-constrained neural network method}
Neural networks, with their powerful function approximation capabilities, can effectively handle complex nonlinear optimization problems. Meanwhile,leveraging the scalable network architecture and nonlinear activation functions, they can efficiently learn and model complex functional relationships, thereby facilitating end-to-end training. In this section, we design the P-SMLP method for solving PGIEPs.

In the context of PGIEP, the model (\ref{model}) is an optimization problem defined on the product manifold \(\mathcal{M} = \mathbb{R}^n \times \mathcal{O}(n) \times \mathcal{O}(n)\), where the optimization variables are the vector \(\textbf{c}\) and the orthogonal matrices \(Q\) and \(Z\). For the optimization of \(Q\) and \(Z\), we embed the orthogonality constraints directly into the neural network as a hard constraint. By employing the orthogonal decomposition operator \(\Psi\), the matrices produced by the network strictly satisfy the orthogonality requirement, ensuring that \(Q\) and \(Z\) remain on the Stiefel manifold throughout the optimization process.
Leveraging the flexibility of the neural network architecture, we can simultaneously output the parameterized vector \(\textbf{c}\).
Accordingly, we propose the P-SMLP neural network equipped with a Stiefel layer, as illustrated in Figure \ref{PSMLP}, which takes an orthogonal matrix \(X\) as input and produces orthogonal matrices \(Q\) and \(Z\), along with the vector \(\textbf{c}\). We denote the P-SMLP neural network by \(\mathcal{F} : \mathbb{R}^{n\times n} \to \bigl(\mathbb{R}^{n},\mathbb{R}^{n\times n}, \mathbb{R}^{n\times n}\bigr)\), which is formulated as follows
\begin{equation*}
\begin{aligned}
  (f(X), \textbf{c}) &:=  W_{\mathcal{L}} \circ \Phi \circ W_{\mathcal{L}-1} \circ \Phi \circ \cdots \circ \Phi \circ W_1 \circ \Phi \circ W_0(X), \\
  (Q, Z) &:= \Psi \circ f(X),
\end{aligned}
\end{equation*}
where \(\Psi\) denotes the orthogonal decomposition operator, \(W_l(x) = \omega_l x + b_l\) is an affine transformation with \(\omega_l\) as the weight matrix and \(b_l\) as the bias term (\(l = 0, 1, \ldots, \mathcal{L}\)), \(\Phi\) is the activation function, and \(\mathcal{L}\) represents the number of hidden layers.
  
The Stiefel layer in this network enforces the orthogonality of the matrices \(Q\) and \(Z\). This is achieved through the orthogonal decomposition operator \(\Psi\), which ensures that the matrices produced by the network adhere strictly to the Stiefel manifold constraint during the forward pass.
We exploit matrix decomposition operations within the PyTorch framework to acquire the orthogonal matrices \(Q\) and \(Z\), which combined with the network's output \(\textbf{c}\), yields the loss function
\begin{equation}\label{Losstol}
    \begin{aligned}
    \text{Loss} &\ \ = \gamma_1\| \Lambda \odot\left(Q^{\top} B(\textbf{c}) Z\right) - I_n \odot\left(Q^{\top} A(\textbf{c}) Z\right) \|_F^2 + \gamma_2\| P \odot (Q^{\top} A(\textbf{c}) Z)\|_F^2 + \gamma_3\| P \odot (Q^{\top} B(\textbf{c}) Z)\|_F^2\\
    &:= \text{Loss}_1+ \text{Loss}_2
\end{aligned}
\end{equation}
where $\text{Loss}_1=\gamma_1\| \Lambda \odot\left(Q^{\top} B(\textbf{c}) Z\right) - I_n \odot\left(Q^{\top} A(\textbf{c}) Z\right) \|_F^2$, $\text{Loss}_2=\gamma_2\| P \odot (Q^{\top} A(\textbf{c}) Z)\|_F^2 + \gamma_3\| P \odot (Q^{\top} B(\textbf{c}) Z)\|_F^2$, \(\gamma_1\), \(\gamma_2\) and \(\gamma_3\) are employed to balance the weights between the three loss terms. 
For the PGIEP with matrix $B(\mathbf{c})$ being singular, the loss function is defined as follows 
\begin{equation}\label{Losstol2}
    \begin{aligned}
    \text{Loss} & = \gamma_1\| \left[\Lambda \odot\left(Q^{\top} B(\textbf{c}) Z\right)\right]_{nn} - \left[I_n \odot\left(Q^{\top} A(\textbf{c}) Z\right)\right]_{nn} \|_F^2 + \gamma_2\| P \odot (Q^{\top} A(\textbf{c}) Z)\|_F^2 + \gamma_3\| \hat{P} \odot (Q^{\top} B(\textbf{c}) Z)\|_F^2.
\end{aligned}
\end{equation}
During backpropagation, automatic differentiation (AD, \cite{baydin2018automatic}) is used to compute the gradients. The Adam optimizer \cite{kingma2014adam} is then applied to minimize the loss function and obtain \(\textbf{c}\).

\begin{figure}[h]
    \centering
    \begin{subfigure}[b]{0.3\textwidth}  
      \centering
      \includegraphics[width=\textwidth]{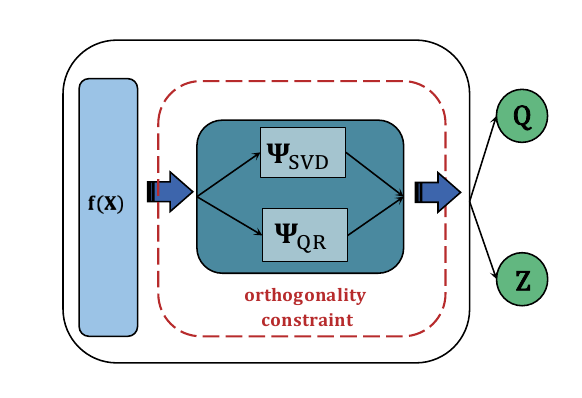}
      \subcaption{Strategy 1}
    \end{subfigure}%
    \hspace{0.02\textwidth}  
    \begin{subfigure}[b]{0.3\textwidth}  
      \centering
      \includegraphics[width=\textwidth]{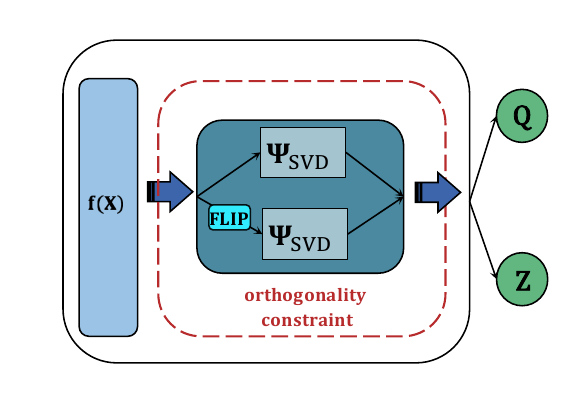}
      \subcaption{Strategy 2}
    \end{subfigure}%
    \hspace{0.02\textwidth} 
    \begin{subfigure}[b]{0.3\textwidth}  
      \centering
      \includegraphics[width=\textwidth]{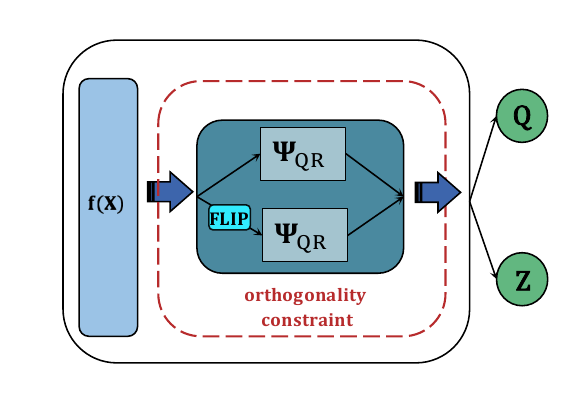}
      \subcaption{Strategy 3}
    \end{subfigure}
    \caption{Illustration of three strategies for generating orthogonal matrices using Stiefel layers, \text{FLIP:} \( f(X) \rightarrow f(X)T(k,k'). \)}
    \label{SVD+QR+FLIP}
\end{figure}

In general, within model (\ref{model}), the matrices \(Q\) and \(Z\) should be distinct. To address this, we propose three strategies for constructing the Stiefel layers.
Strategy 1 involves using both SVD and QR decomposition techniques to generate the orthogonal matrices \(Q\) and \(Z\) simultaneously.
Strategy 2 and Strategy 3 each employ a single orthogonal decomposition method (either SVD or QR). Additionally, the \( k \)-th and \( k' \)-th columns of \( f(X) \) are swapped, denoted as FLIP operation, i.e., \( f(X) T(k,k') \), where the matrix \( T(k,k') \) is obtained by swapping the \( k \)-th and \( k' \)-th rows of the identity matrix. This operation ensures the generation of distinct orthogonal matrices \( Q \) and \( Z \).
These three strategies are illustrated in Figure \ref{SVD+QR+FLIP}, and the P-SMLP algorithm for solving PGIEPs is provided in Algorithm \ref{alg}.

\vspace{-10pt}
\begin{figure}[h]
    \begin{center}
    \begin{minipage}{\textwidth}
    \begin{algorithm}[H]
    \caption{P-SMLP Optimization Algorithm for PGIEPs.}\label{alg}
        \begin{algorithmic}[1]
        \REQUIRE Initial orthogonal matrix \( X \)
        \ENSURE Parameter vector \( \textbf{c} := \{c_1, \dots, c_n\} \)
        \STATE Maximum number of training epochs \( N_{\text{epoch}} \), tolerance \( \varepsilon \), projection matrix \( P \), diagonal matrix \( \Lambda = \operatorname{diag}(\tilde{\sigma}) \), matrices \( A_j, B_j (0 \leqslant j \leqslant n) \), activation function \( \Phi \), number of hidden layers \( \mathcal{L} \), constants \( k, k' \), permutation matrix \( T(k,k') \), and orthogonal decomposition operator \( \Psi \).
        \STATE Randomly initialize the SMLP neural network parameters \( \theta(\boldsymbol{\omega}, \textbf{b}) := \{(\omega_l, b_l) \ | \ l = 0, \cdots, \mathcal{L} \} \) and set \( H_0 := X \).
        \FOR{Epoch = 1 \TO \( N_{\text{epoch}} \)}
            \FOR{\( l = 1 \) \TO \( \mathcal{L} \)}
                \STATE \( W_{l-1} = w_{l-1} \cdot H_{l-1} + b_{l-1}; \)
                \STATE \( H_l = \Phi \circ W_{l-1}; \)
            \ENDFOR
        \STATE \( (f(X), \textbf{c}) := W_{\mathcal{L}} \circ \Phi \circ W_{\mathcal{L}-1} \circ \Phi \circ \cdots \circ \Phi \circ W_1 \circ \Phi \circ W_0(X) \)
        \STATE \textbf{Stiefel layer:} \( (Q, Z) := \Psi \circ f(X) \), specifically: \begin{equation*}
        \begin{aligned}
            \text{Strategy 1:} & \quad Q \xleftarrow{} \Psi_{\text{SVD}} \circ f(X); \quad Z \xleftarrow{} \Psi_{\text{QR}} \circ f(X) \\
            \text{or}\quad \quad \quad \  \  \\
            \text{Strategy 2:} & \quad Q \xleftarrow{} \Psi_{\text{SVD}} \circ f(X); \quad Z \xleftarrow{} \Psi_{\text{SVD}} \circ ( f(X) T(k,k') ) \\
            \text{or}\quad \quad \quad \  \  \\
            \text{Strategy 3:} & \quad Q \xleftarrow{} \Psi_{\text{QR}} \circ f(X); \quad Z \xleftarrow{} \Psi_{\text{QR}} \circ ( f(X) T(k,k') )
            \end{aligned}
        \end{equation*}
        \STATE Update network parameters by minimizing the \( \text{Loss} \) in (\ref{Losstol}) or (\ref{Losstol2}), using AD in the PyTorch framework to compute gradients and update weights.
        \IF {\( \text{Loss} < \varepsilon \)}
            \STATE Break;
        \ENDIF
        \ENDFOR
        \RETURN \( \textbf{c} \)
        \end{algorithmic}
    \end{algorithm}
    \end{minipage}
    \end{center}
\end{figure}
\vspace{-20pt}

\section{Numerical examples }\label{4}
In this section, we test several examples of PGIEPs, including symmetric, asymmetric, and multiple- eigenvalue cases, as well as cases with $B(\textbf{c})$ being singular, to demonstrate the effectiveness of the proposed model and the P-SMLP method.
All experiments are conducted using the PyCharm IDE and the PyTorch 2.2.2 framework.
The neural network architecture is denoted as $[l_1,\dots,l_m]$, representing a neural network with $m$ hidden layers, where the $m$-th layer contains $l_m$ neurons.
The optimization of loss function is solved by the Adam method. 
We choose ReLU as the activation function and the training is stopped once the maximum epochs are attained. 
The learning rate is set to 0.001 if we do not specify otherwise.
Define \( k=1, k'=2 \) and \(\gamma_1=\gamma_2=\gamma_3=0.5\).
We use the $L^{\infty}$ error to measure the accuracy of the present method, which is defined as
\begin{equation*}
    \|\tilde{\sigma} - \sigma\|_{\infty} =  \max_{1\leqslant i \leqslant n} |\tilde{\lambda}_i-\lambda_i|, 
\end{equation*}
where $\tilde{\lambda}_i$ is the given eigenvalue and $\lambda_i \in \sigma$, with $\sigma$ being the set of generalized eigenvalues of \( (A(\textbf{c}), B(\textbf{c})) \), which are derived from the output \( \textbf{c} \) of the P-SMLP.

\begin{table}[h]
  \centering
  \caption{Example 4.1: Numerical results for the parameter vector \( \textbf{c} \) and  \( \|\tilde{\sigma} - \sigma\|_{\infty} \) and $\text{eps}$ represents the machine precision.}
  \begin{tabular}{>{\centering\arraybackslash}p{2.5cm} 
                  >{\centering\arraybackslash}p{3.0cm} 
                  >{\centering\arraybackslash}p{2.75cm} 
                  >{\centering\arraybackslash}p{2.75cm} 
                  >{\centering\arraybackslash}p{2.5cm}}  
    \toprule
    Strategy & \textbf{$\sigma=\{\lambda_1,\lambda_2\}$} & \textbf{$c_1$} & \textbf{$c_2$} & \textbf{$\|\tilde{\sigma}-\sigma\|_{\infty}$} \\ 
    \midrule
     1 & -1.0000, 3.0000 & -0.453003 & 0.361171 & $\text{eps}$ \\ 
     2 & -1.0000, 3.0000 & -0.453003 & 0.361171 & $\text{eps}$ \\ 
     3 & -1.0000, 3.0000 & -0.453003 & 0.361171 & $\text{eps}$ \\ 
    \bottomrule
  \end{tabular}
  \label{Example1}
\end{table}

\begin{figure}[h]
    \centering
    \begin{minipage}{0.32\textwidth}
      \centering
      \includegraphics[width=\linewidth]{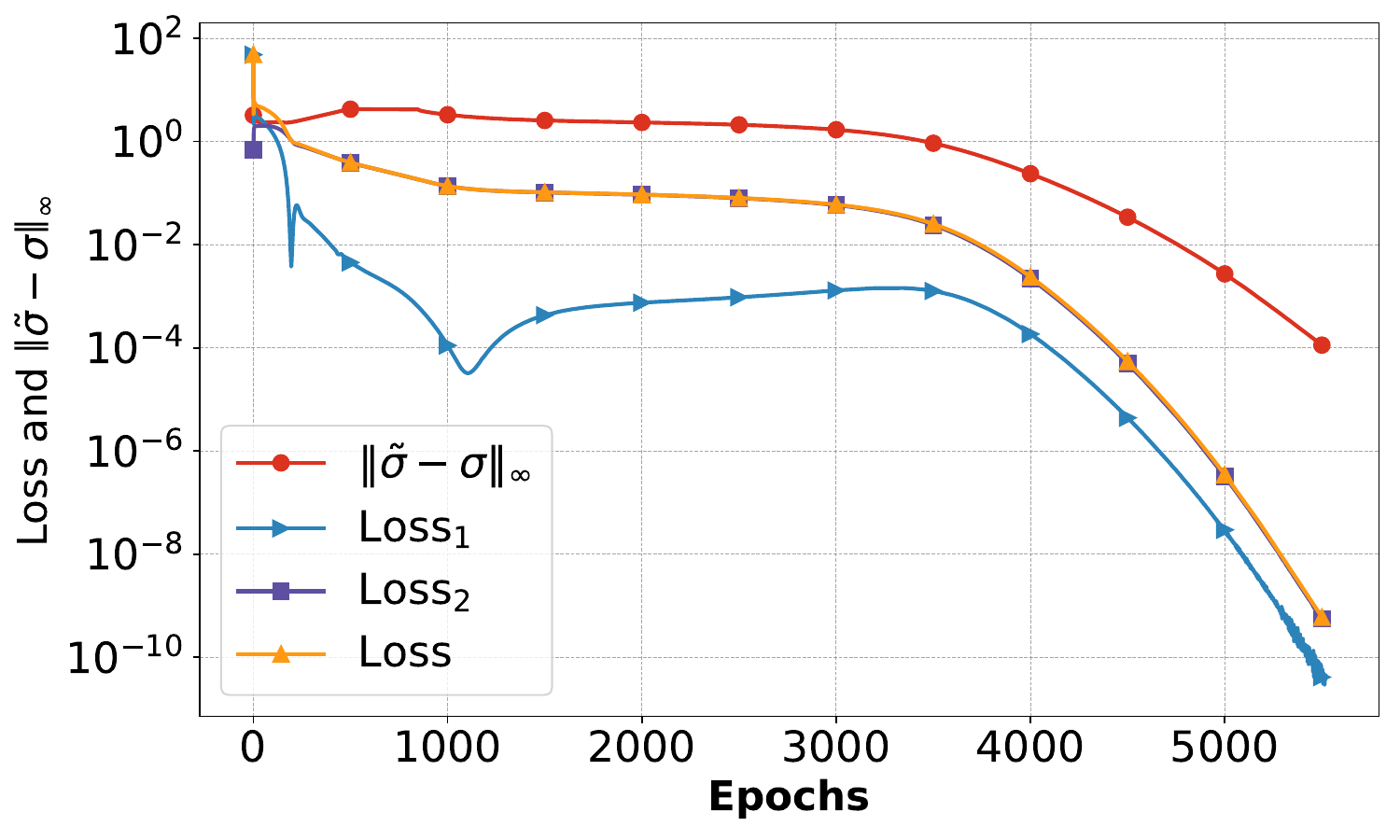}
      \subcaption{Strategy 1: Loss and $\|\tilde{\sigma}-\sigma\|_{\infty}$.}  
    \end{minipage}
    \hfill
    \begin{minipage}{0.32\textwidth}
        \centering
        \includegraphics[width=\linewidth]{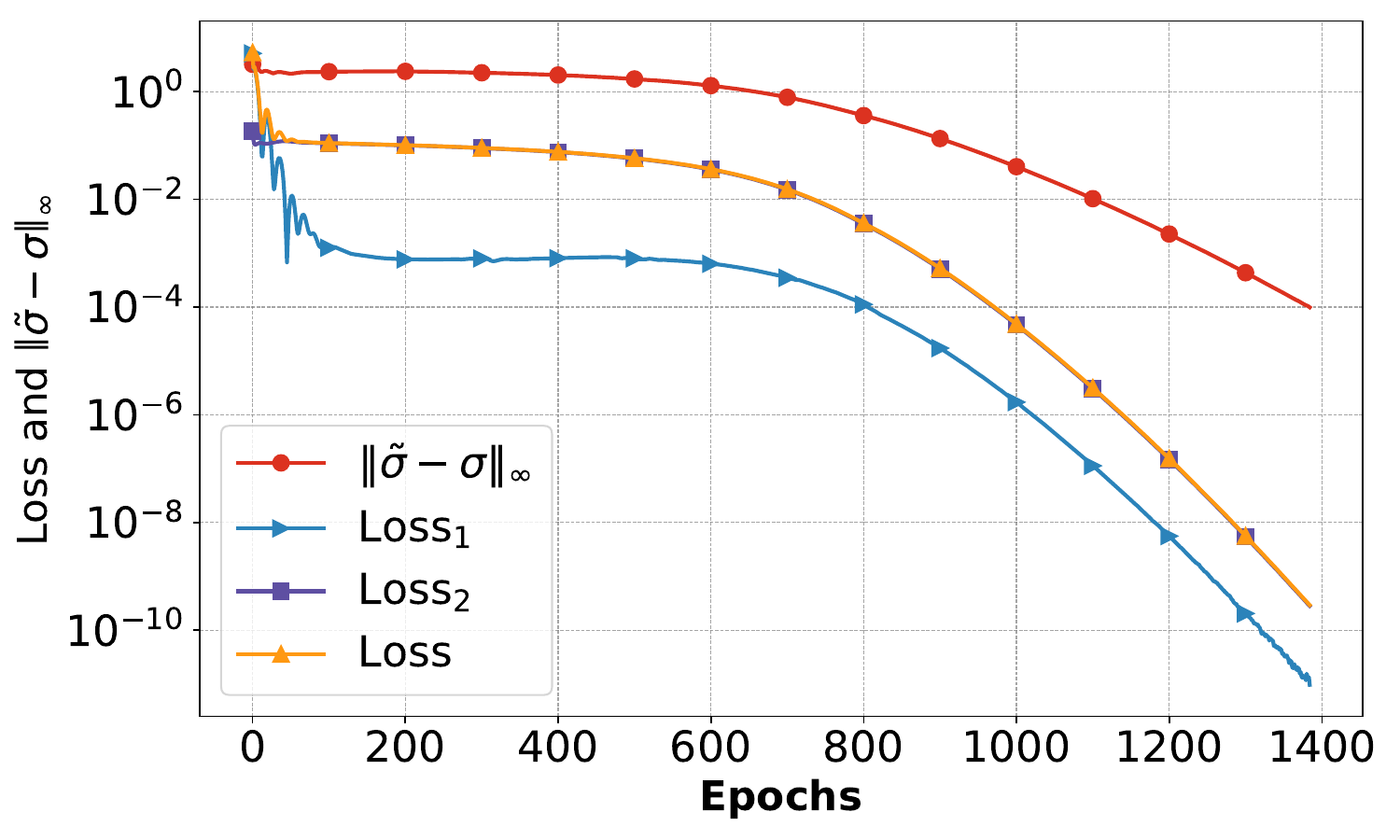}
        \vspace{-10pt}
        \subcaption{Strategy 2: Loss and $\|\tilde{\sigma}-\sigma\|_{\infty}$.}  
      \end{minipage}
    \hfill
    \begin{minipage}{0.32\textwidth}
        \centering
        \includegraphics[width=\linewidth]{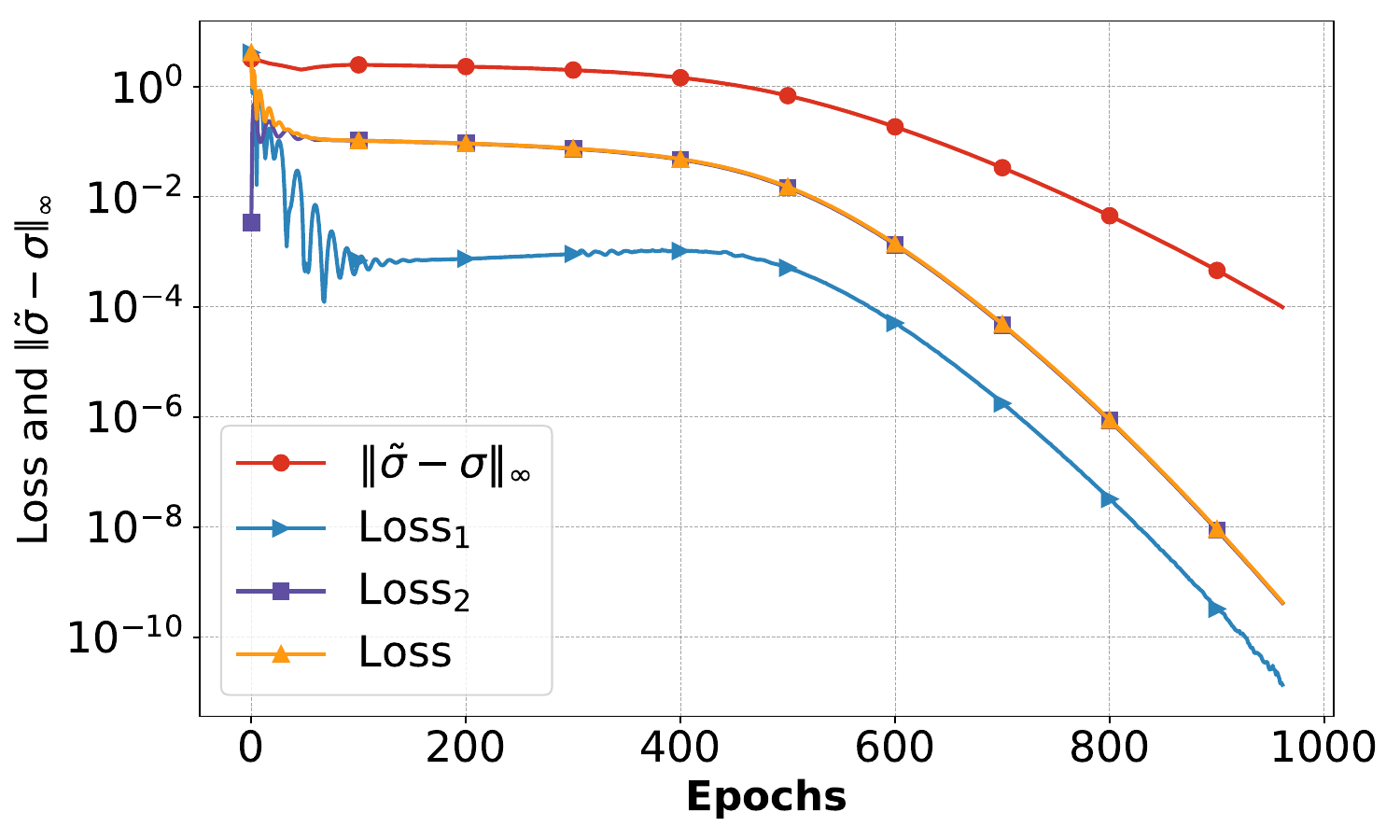}
        \vspace{-10pt}
        \subcaption{Strategy 3: Loss and $\|\tilde{\sigma}-\sigma\|_{\infty}$.}  
      \end{minipage}
  
    \vspace{0.5cm}

    \begin{minipage}{0.32\textwidth}
        \centering
        \includegraphics[width=\linewidth]{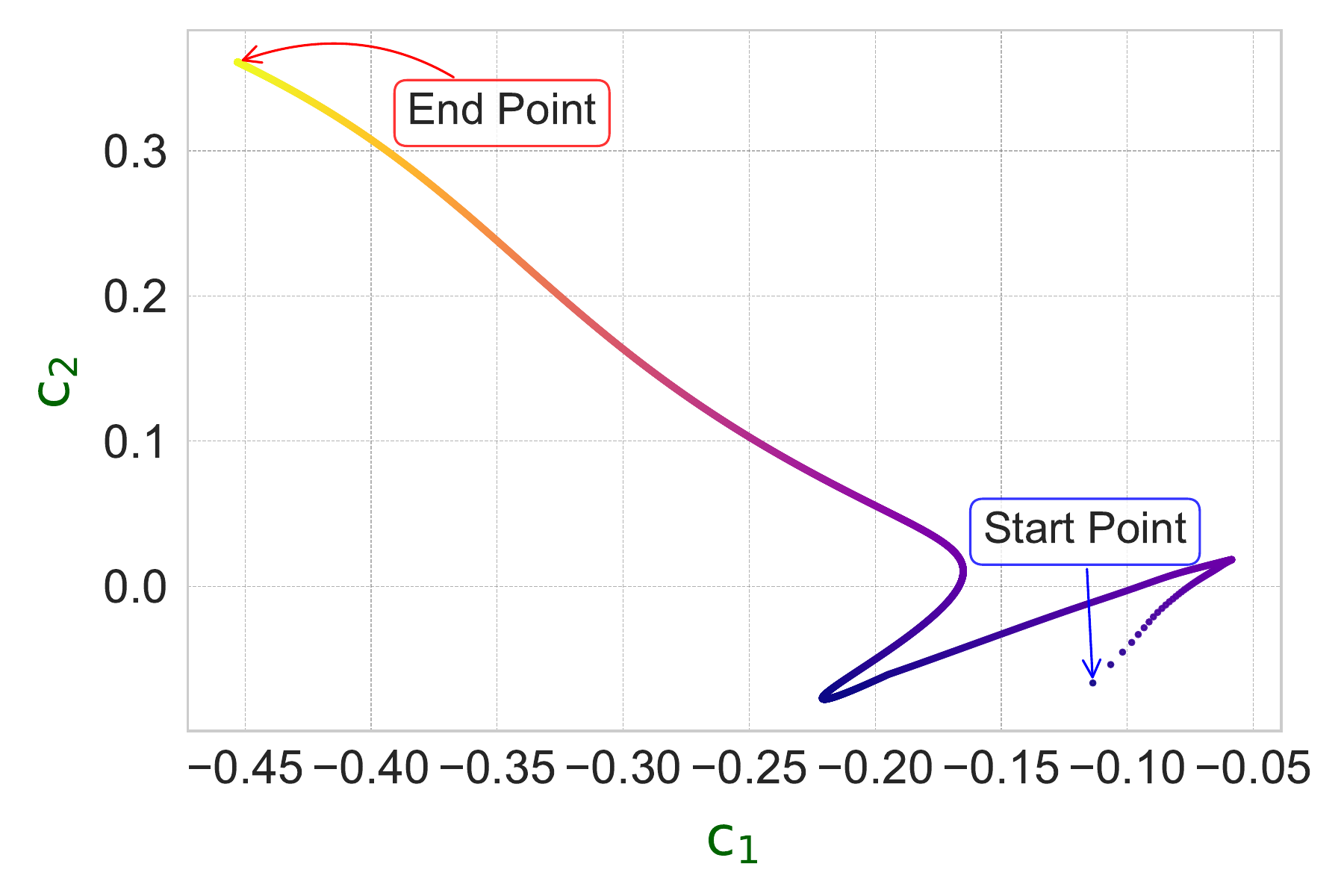}   
        \subcaption{Strategy 1 : The variation of $\textbf{c}$ during the training process.}  
      \end{minipage}
    \hfill
    \begin{minipage}{0.32\textwidth}
        \centering
        \includegraphics[width=\linewidth]{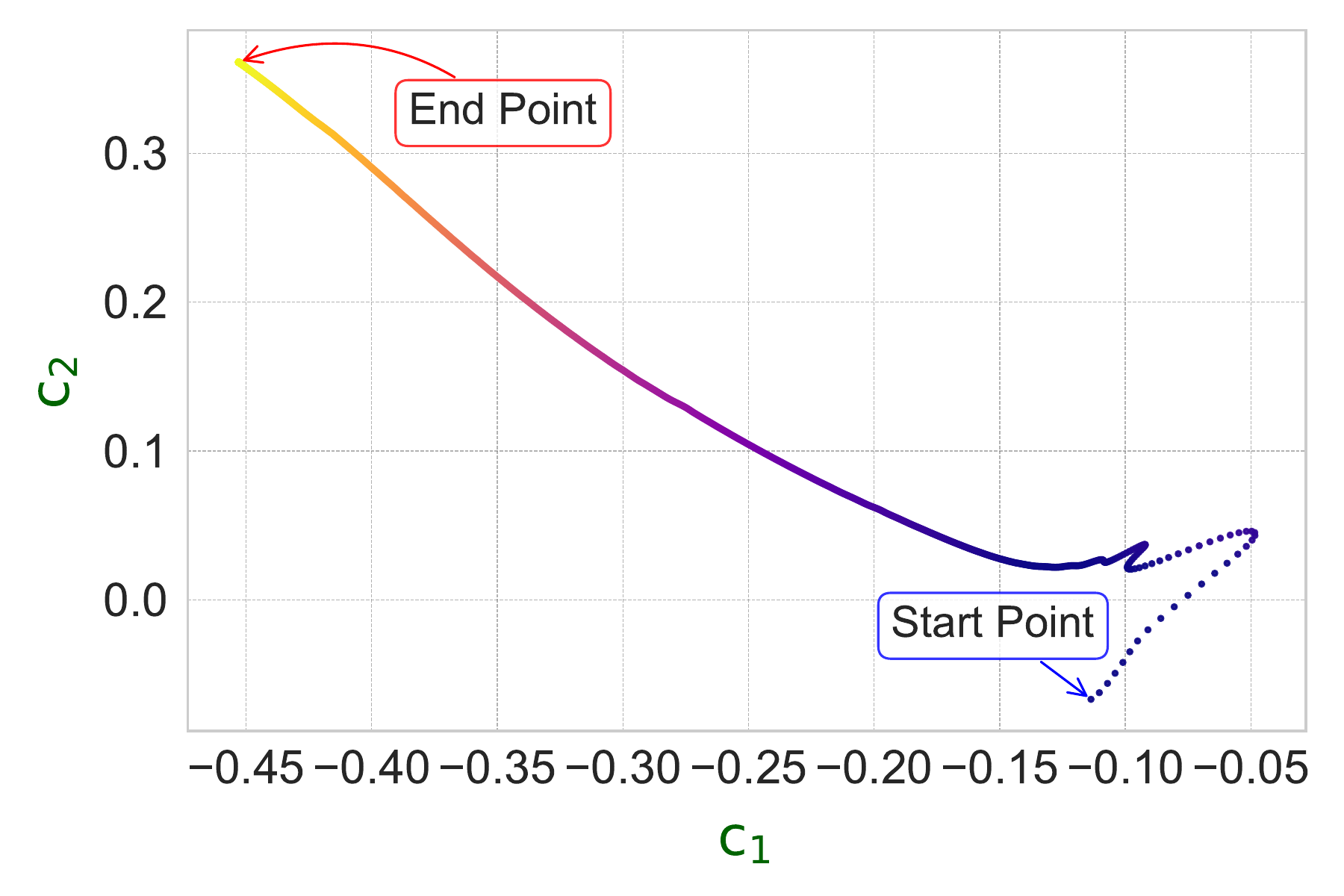}   
        \subcaption{Strategy 2 : The variation of $\textbf{c}$ during the training process.}  
      \end{minipage}
    \hfill
    \begin{minipage}{0.32\textwidth}
      \centering
      \includegraphics[width=\linewidth]{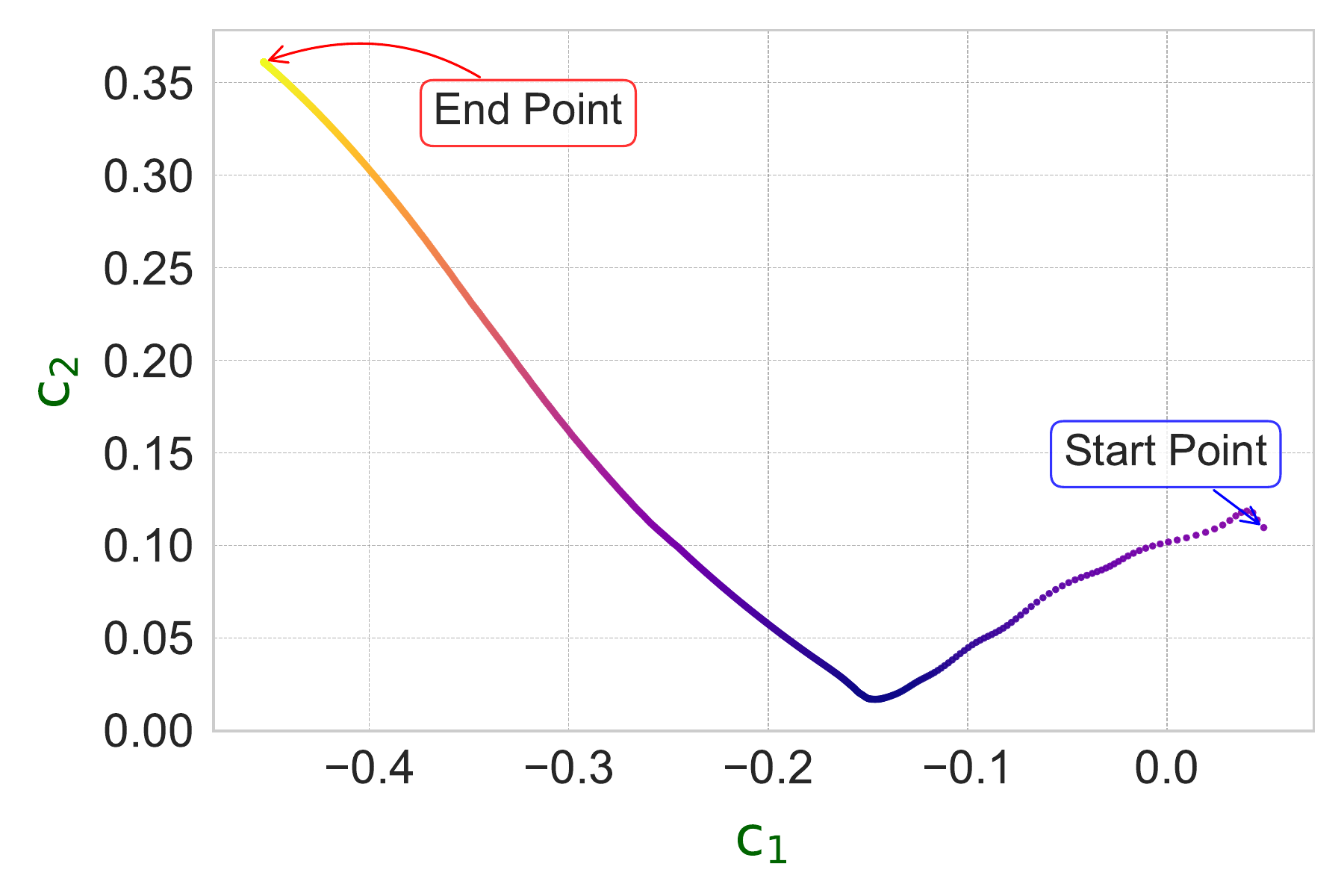}   
      \subcaption{Strategy 3 : The variation of $\textbf{c}$ during the training process.}  
    \end{minipage}
  
    \caption{Example 4.1: The Loss curve and the variation of \( \textbf{c} \) during the training process of the P-SMLP.}
    \label{Example41}
\end{figure}

\subsection{Asymmetric PGIEP}
This example is taken from \cite{dai2015solvability}.
Let
\begin{equation*}
    \begin{aligned}
        A_0=\left(\begin{array}{cc}
            1.25 & 1 \\
            1 & 1.25
            \end{array}\right), \quad B_0=\left(\begin{array}{cc}
                -0.25 & 0 \\
                0 & 0.75
                \end{array}\right),
    \end{aligned}
\end{equation*}
\begin{equation*}
    \begin{aligned}
        A_1=\left(\begin{array}{cc}
            1.9 & 0.7 \\
            0.7 & 1.7
            \end{array}\right),\quad \quad B_1=\left(\begin{array}{cc}
                0.3 & 0.3 \\
                0.3 & 0.6
                \end{array}\right)
    \end{aligned}
\end{equation*}
\begin{equation*}
    \begin{aligned}
        A_2=\left(\begin{array}{cc}
            0.575 & -0.1 \\
            -0.775 & 4
            \end{array}\right),\quad B_2=\left(\begin{array}{ll}
                -0.475 & 0 \\
                -0.225 & 0
                \end{array}\right),
    \end{aligned}
\end{equation*} 
\begin{equation*}
    A(\textbf{c})=A_0+\sum_{i=1}^{2}c_iA_i,\quad B(\textbf{c})=B_0+\sum_{i=1}^{2}c_iB_i,
\end{equation*}
\begin{equation*}
    \tilde{\sigma}=\{-1,3\}.
\end{equation*}
The parameter vector $\textbf{c}=\{c_1,c_2\}$ is to be determined.
In this experiment, a neural network with two hidden layers is employed, each containing 40 neurons.
The maximum epochs number is set to be 10000. The results are displayed in Table \ref{Example1}.
To further demonstrate the training efficiency of different strategies, we also present the variation of the loss with respect to the epochs when \(  \|\tilde{\sigma} - \sigma\|_{\infty} < 1\times 10^{-4} \), as shown in Figure \ref{Example41}.

\textbf{Analysis.}
In Table \ref{Example1}, we present the \( L^{\infty} \) errors of numerical solutions for different strategies. It is observed that the P-SMLP method yields a satisfactory solution, as the maximum norm error can approach machine precision. Furthermore, it is evident that all the strategies we proposed are effective, further demonstrating the robustness of the P-SMLP method and the reliability of the proposed model.

As illustrated in Figure \ref{Example41}. (a)-(c), when $\|\tilde{\sigma}-\sigma\|_{\infty}$ achieving the same accuracy of \(1 \times 10^{-4}\), Strategy 3 requires the fewest epochs. 
As shown in Figure \ref{Example41}. (d)-(e), the algorithm converges to the same solution under different decomposition strategies, although the convergence trajectories differ. The "start point" represents the solution \( \textbf{c} \) obtained after the first training of the P-SMLP method, while the "end point" represents the solution obtained when the algorithm has converged.

\subsection{Symmetric PGIEP}
Consider an example in \cite{dai1999algorithm}. Let
$A_0=\operatorname{diag}\left\{9,11,10,8,14\right\}$,  $B_0=\operatorname{diag}\left\{11,13,15,11,10\right\}$,  $A_1=B_1=I_5$,
\begin{equation*}
    \begin{aligned}
        A_2=\left(\begin{array}{ccccc}
            0 & 2 & 0 & 0 & 0 \\
            2 & 0 & 1 & 0 & 0 \\
            0 & 1 & 0 & 1 & 0 \\
            0 & 0 & 1 & 0 & 1 \\
            0 & 0 & 0 & 1 & 0
            \end{array}\right), \quad
            A_3=\left(\begin{array}{ccccc}
                0 & 0 & 3 & 0 & 0 \\
                0 & 0 & 0 & 2 & 0 \\
                3 & 0 & 0 & 0 & -1 \\
                0 & 2 & 0 & 0 & 1 \\
                0 & 0 & -1 & 0 & 0
                \end{array}\right), \quad
            A_4=\left(\begin{array}{ccccc}
                    0 & 0 & 0 & 1 & 0 \\
                    0 & 0 & 0 & 0 & 1 \\
                    0 & 0 & 0 & 0 & 0 \\
                    1 & 0 & 0 & 0 & 0 \\
                    0 & 1 & 0 & 0 & 0
            \end{array}\right),
    \end{aligned}
\end{equation*}
\begin{equation*}
    \begin{aligned}
            B_2=\left(\begin{array}{ccccc}
                0 & 1 & 0 & 0 & 0 \\
                1 & 0 & 1 & 0 & 0 \\
                0 & 1 & 0 & -1 & 0 \\
                0 & 0 & -1 & 0 & -1 \\
                0 & 0 & 0 & -1 & 0
                \end{array}\right), 
        \quad B_3=\left(\begin{array}{ccccc}
        0 & 0 & -1 & 0 & 0 \\
        0 & 0 & 0 & -1 & 0 \\
        -1 & 0 & 0 & 0 & 1 \\
        0 & -1 & 0 & 0 & 0 \\
        0 & 0 & 1 & 0 & 0
        \end{array}\right),  
        \quad B_4=\left(\begin{array}{ccccc}
        0 & 0 & 0 & 2 & 0 \\
        0 & 0 & 0 & 0 & 1 \\
        0 & 0 & 0 & 0 & 0 \\
        2 & 0 & 0 & 0 & 0 \\
        0 & 1 & 0 & 0 & 0
        \end{array}\right),
    \end{aligned}
\end{equation*}
\begin{equation*}
\begin{aligned}
&A_5=\left(\begin{array}{ccccc}
0 & 0 & 0 & 0 & 1 \\
0 & 0 & 0 & 0 & 0 \\
0 & 0 & 0 & 0 & 0 \\
0 & 0 & 0 & 0 & 0 \\
1 & 0 & 0 & 0 & 0
\end{array}\right)=B_5, 
 \quad A(\textbf{c})=A_0+\sum_{i=1}^5 c_i A_i, \quad B(\textbf{c})=B_0+\sum_{i=1}^5 c_i B_i,
\end{aligned}
\end{equation*}
\begin{equation*}
\tilde{\sigma}=\{0.4327,0.6636,0.9438,1.1092,1.4923\} .
\end{equation*}
The parameter vector $\textbf{c}=\{c_1,c_2,c_3,c_4,c_5\}$ is to be determined.
In this experiment, a neural network with two hidden layers is employed, each containing 40 neurons.
The maximum epochs number is set to be 200000. The results are displayed in Table \ref{Example2}.

\textbf{Analysis.}
The numerical results in Table \ref{Example2} indicate that the proposed P-SMLP method delivers ideal performance across various strategies, with Strategies 1 and 2 significantly outperforming Strategy 3 in terms of accuracy, when trained for the same number of epochs.

\begin{table}[h]
    \centering
    \caption{Example 4.2: Numerical results for the parameter vector \( \textbf{c} \) and  \( \|\tilde{\sigma} - \sigma\|_{\infty} \).}
    \begin{tabular}{>{\centering\arraybackslash}p{1.7cm} 
                    >{\centering\arraybackslash}p{1.8cm} 
                    >{\centering\arraybackslash}p{1.8cm} 
                    >{\centering\arraybackslash}p{1.8cm} 
                    >{\centering\arraybackslash}p{1.8cm} 
                    >{\centering\arraybackslash}p{1.8cm}
                    >{\centering\arraybackslash}p{2.5cm}}  
      \toprule
      \multirow{2}{*}{Strategy} & \multicolumn{5}{c}{ \textbf{c}} &\multirow{2}{*}{\textbf{$\|\tilde{\sigma}-\sigma\|_{\infty}$}}\\ 
      \cmidrule{2-6}
       & \textbf{$c_1$} & \textbf{$c_2$} & \textbf{$c_3$} & \textbf{$c_4$} & \textbf{$c_5$} &\\ 
      \midrule
       1 & 1.020168 & -0.987295 & 0.997568 & -0.946939 & 1.179553 & $6.393790 \times 10^{-4}$  \\ 
       2 & 0.978628 & 1.000773 & 0.999320 & 1.008117 & 0.945341 & $2.673864 \times 10^{-4}$\\ 
       3 & 0.878130 & -1.028322 & 1.001241 & -1.121183 & 0.372820 & $2.035528 \times 10^{-3}$\\ 
      \bottomrule
    \end{tabular}
    \label{Example2}
  \end{table}

\subsection{PGIEP with multiple eigenvalues}
Consider an example in \cite{dai2015solvability}. Let
\begin{equation*}
    \begin{aligned}
        A_0 & =\left(\begin{array}{ccccc}
            30 & 8.4 & -0.4 & 0 & 0 \\
            5.64 & 33.56 & 7.28 & -0.16 & 0 \\
            -0.09 & 7.61 & 13.75 & 2.72 & -0.048 \\
            0 & -0.12 & 3.85 & 4.89 & 0.624 \\
            0 & 0 & -0.03 & 0.9625 & 2.154
            \end{array}\right),\quad 
        A_1=\left(\begin{array}{ccccc}
                15 & 5 & 0 & 0 & 0 \\
                2.64 & 0.88 & 0 & 0 & 0 \\
                0.09 & 0.03 & 0 & 0 & 0 \\
                0 & 0 & 0 & 0 & 0 \\
                -0.015 & -0.005 & 0 & 0 & 0
                \end{array}\right),      
    \end{aligned}
\end{equation*}
\begin{equation*}
    \begin{aligned}
        A_2=\left(\begin{array}{ccccc}
            0 & 0 & 0 & 0 & 0 \\
            0 & 16 & 4 & 0 & 0 \\
            0 & 3.64 & 0.91 & 0 & 0 \\
            0 & 0.04 & 0.01 & 0 & 0 \\
            0 & 0 & 0 & 0 & 0
            \end{array}\right),\quad
        A_3=\left(\begin{array}{ccccc}
                0 & 0 & 0.1 & 0.025 & 0 \\
                0 & 0 & 0.02 & 0.005 & 0 \\
                0 & 0 & 6 & 1.5 & 0 \\
                0 & 0 & 1.88 & 0.47 & 0 \\
                0 & 0 & 0.01 & 0.0025 & 0
            \end{array}\right),
    \end{aligned}
\end{equation*}
\begin{equation*}
    \begin{aligned}
        A_4=\left(\begin{array}{ccccc}
            0 & 0 & 0 & -0.1 & -0.02 \\
            0 & 0 & 0 & 0.02 & 0.004 \\
            0 & 0 & 0 & 0.01 & 0.002 \\
            0 & 0 & 0 & 2 & 0.4 \\
            0 & 0 & 0 & 0.47 & 0.094
            \end{array}\right), \quad
            A_5=\left(\begin{array}{ccccc}
                0 & 0 & 0 & 0 & 0.15 \\
                0 & 0 & 0 & 0 & -0.05 \\
                0 & 0 & 0 & 0 & 0.01 \\
                0 & 0 & 0 & 0 & 0.01 \\
                0 & 0 & 0 & 0 & 1
                \end{array}\right),
    \end{aligned}
\end{equation*}
\begin{equation*}
    \begin{aligned}
        B_0 & =\left(\begin{array}{ccccc}
            13.007181 & 3.997188 & 0 & 0 & 0 \\
            2.498594 & 15.007181 & 3.997636 & 0 & 0 \\
            0 & 3.997636 & 5.007181 & 1.799363 & 0 \\
            0 & 0 & 2.498938 & 0.507181 & 0.600012 \\
            0 & 0 & 0 & 0.900024 & -0.892819
            \end{array}\right),\quad B_1=I_5, 
    \end{aligned}
\end{equation*}
\begin{equation*}
    \begin{aligned}
        B_2=\left(\begin{array}{ccccc}
            0 & 1 & 0 & 0 & 0 \\
            0.5 & 0 & 0 & 0 & 0 \\
            0 & 0 & 0 & 0 & 0 \\
            0 & 0 & 0 & 0 & 0 \\
            0 & 0 & 0 & 0 & 0
            \end{array}\right),\quad
            B_3=\left(\begin{array}{ccccc}
                0 & 0 & 0 & 0 & 0 \\
                0 & 0 & 1 & 0 & 0 \\
                0 & 1 & 0 & 0 & 0 \\
                0 & 0 & 0 & 0 & 0 \\
                0 & 0 & 0 & 0 & 0
                \end{array}\right),
    \end{aligned}
\end{equation*}
\begin{equation*}
    \begin{aligned}
        B_4=\left(\begin{array}{ccccc}
            0 & 0 & 0 & 0 & 0 \\
            0 & 0 & 0 & 0 & 0 \\
            0 & 0 & 0 & 0.3 & 0 \\
            0 & 0 & 0.5 & 0 & 0 \\
            0 & 0 & 0 & 0 & 0
            \end{array}\right),\quad  
            B_5=\left(\begin{array}{ccccc}
                0 & 0 & 0 & 0 & 0 \\
                0 & 0 & 0 & 0 & 0 \\
                0 & 0 & 0 & 0 & 0 \\
                0 & 0 & 0 & 0 & 0.1 \\
                0 & 0 & 0 & 0.2 & 0
                \end{array}\right), 
    \end{aligned}
\end{equation*}
\begin{equation*}
A(\textbf{c})=A_0+\sum_{i=1}^5 c_i A_i, \quad B(\textbf{c})=B_0+\sum_{i=1}^5 c_i B_i,     
\end{equation*}
\begin{equation*}
\tilde{\sigma}=\{0.5,0.5,2,3,4\}.
\end{equation*}
The parameter vector $\textbf{c}=\{c_1,c_2,c_3,c_4,c_5\}$ is to be determined.
In this experiment, a neural network with two hidden layers is employed, each containing 60 neurons. The learning rate is set to 0.01.
The maximum number of epochs is set to 200000.  The results are displayed in Table \ref{Example3}.

\begin{table}[h]
    \centering
    \caption{Example 4.3: Numerical results for the parameter vector \( \textbf{c} \) and  \( \|\tilde{\sigma} - \sigma\|_{\infty} \).}
    \begin{tabular}{>{\centering\arraybackslash}p{1.7cm} 
                    >{\centering\arraybackslash}p{1.8cm} 
                    >{\centering\arraybackslash}p{1.8cm} 
                    >{\centering\arraybackslash}p{1.8cm} 
                    >{\centering\arraybackslash}p{1.8cm} 
                    >{\centering\arraybackslash}p{1.8cm}
                    >{\centering\arraybackslash}p{2.5cm}}  
      \toprule
      \multirow{2}{*}{Strategy} & \multicolumn{5}{c}{ \textbf{c}} &\multirow{2}{*}{\textbf{$\|\tilde{\sigma}-\sigma\|_{\infty}$}}\\ 
      \cmidrule{2-6}
       & \textbf{$c_1$} & \textbf{$c_2$} & \textbf{$c_3$} & \textbf{$c_4$} & \textbf{$c_5$} &\\ 
      \midrule
       1 &  0.753495 & -0.073002 & 0.751434 & -1.872000 & -2.292557  & $1.363754\times 10^{-4}$ \\ 
       2 &  0.751432 & -0.072381 & 0.721791 & -1.533055 & -2.083673  & $1.505375\times 10^{-3}$ \\ 
       3 &  -1.628540 & 0.412606 & -0.459883 & -3.234722 & -3.431235  &  $5.006790\times 10^{-6}$\\ 
      \bottomrule
    \end{tabular}
    \label{Example3}
  \end{table}

\textbf{Analysis.}
Based on the results in Table \ref{Example3}, the P-SMLP method demonstrates a remarkable ability to achieve a satisfactory solution, even in cases with multiple eigenvalues. The performance of Strategy 3 surpasses that of the other two in terms of accuracy.
The P-SMLP method, using the orthogonal matrix generation approach of Strategy 3, solves the model proposed in this paper, achieving a high-precision result of \(10^{-6}\) within the given maximum number of epochs.

\begin{table}[h]
  \centering
  \caption{Example 4.4: Network settings and numerical results of $\|\tilde{\sigma}-\sigma\|_{\infty}$.}
  \begin{tabular}{>{\centering\arraybackslash}p{1.5cm} 
                  >{\centering\arraybackslash}p{3.0cm} 
                  >{\centering\arraybackslash}p{2.6cm} 
                  >{\centering\arraybackslash}p{2.5cm} 
                  >{\centering\arraybackslash}p{4.0cm}}  
  \toprule
   {n} & {Hidden Layer}& {Epochs} & {Strategy} & \textbf{$\|\tilde{\sigma}-\sigma\|_{\infty}$} \\ 
   \midrule 
       \multirow{3}{*}{10} & \multirow{3}{*}{[40, 40]} &\multirow{3}{*}{200000} & 1 & $7.367134 \times 10^{-5}$ \\ 
       \cmidrule(lr){4-5}
     & & &  2 & $3.397465 \times 10^{-6}$ \\   
            \cmidrule(lr){4-5}
      & & &  3 & $8.344650 \times 10^{-7}$\\   
         \midrule 
    \multirow{3}{*}{20} & \multirow{3}{*}{[60, 60]} &\multirow{3}{*}{500000} & 1 & $7.689238 \times 10^{-3}$ \\ 
       \cmidrule(lr){4-5}
     & & &  2 & $7.045269 \times 10^{-5}$ \\   
            \cmidrule(lr){4-5}
      & & &  3 & $3.933907 \times 10^{-6}$\\   
         \midrule 
    \multirow{3}{*}{30} & \multirow{3}{*}{[100, 100]} &\multirow{3}{*}{1000000} & 1 &  $ 4.254089\times 10^{-2}$ \\ 
       \cmidrule(lr){4-5}
     & & &  2 & $ 1.766682\times 10^{-4}$\\   
            \cmidrule(lr){4-5}
      & & &  3 &$ 4.053116\times 10^{-6}$ \\   
            \midrule 
          \multirow{3}{*}{40} & \multirow{3}{*}{[140, 140]} &\multirow{3}{*}{1000000} & 1 &  $ 3.458531\times 10^{-1}$ \\ 
       \cmidrule(lr){4-5}
     & & &  2 & $ 8.706093\times 10^{-3}$\\   
            \cmidrule(lr){4-5}
      & & &  3 &$ 1.457930\times 10^{-4}$ \\  
   \bottomrule
  \end{tabular}
  \label{4.Example}
\end{table}
\subsection{Large-scale PGIEP}
In this section, a large-scale PGIEP is considered. The aim is to demonstrate the effectiveness of the proposed model and the P-SMLP method as the problem dimensionality increases.

Consider an example in \cite{dalvand2021newton}. 
Let $A_0=B_0=\textbf{0}$, and define the \(n \times n\) matrices 
\begin{equation*}
    A_k = \left\{a_{i j}^k\right\}, \quad B_k=\left\{b_{i j}^k\right\},
\end{equation*}
where 
\begin{equation*}
    a_{ij}^k = \begin{cases}
1, & \text{if } \ |i-j| = k-1, \\
0, & \text{otherwise} , 
\end{cases}
\end{equation*}
and 
\begin{equation*}
    b_{i j}^k=\begin{cases}
1, \text { if } \ i=j=k, \\
0, \text { otherwise },
\end{cases}
\end{equation*}
with $k,i,j\in\{1,2,\dots,n\}$.
Let \(\mathbf{c}^* = (2, 1, 1, \dots, 1)^\top\) be an \(n\)-dimensional vector. We define the matrices
\begin{equation*}
A(\mathbf{c}^*) = A_0 + \sum_{k=1}^n c_k^* A_k, \quad B(\mathbf{c}^*) = B_0 + \sum_{k=1}^n c_k^* B_k,
\end{equation*} 
and regard the spectrum \(\tilde{\sigma}\) of the generalized eigenvalue problem  
\begin{equation*}
A(\mathbf{c}^*)x = \lambda B(\mathbf{c}^*)x
\end{equation*} 
as the known spectrum. The goal is to determine a parameter vector \(\mathbf{c}\) such that the generalized eigenvalue problem  
\begin{equation*}
(A_0 + \sum_{k=1}^n c_k A_k)x = \lambda (B_0 + \sum_{k=1}^n c_k B_k)x
\end{equation*} 
has the spectrum \(\tilde{\sigma}\).  
Four sets of experiments were conducted using matrices of varying dimensions. The neural network architecture configurations and the corresponding numerical results are summarized in Table \ref{4.Example}.

\textbf{Analysis.}
As shown in Table \ref{4.Example}, increasing the problem dimensionality requires a larger network and more training epochs. Under the same experimental settings, different decomposition strategies yield different values of $\|\tilde{\sigma} - \sigma\|_{\infty}$. Among them, Strategy 3 performs better than Strategies 1 and 2 in terms of numerical accuracy. When $n = 40$, Strategy 3 achieves an accuracy on the order of $10^{-4}$. This indicates that the proposed method and model remain effective even as the problem dimensionality increases.

\begin{table}[h]
  \centering
  \caption{Example 4.5: Accuracy of eigenvalue recovery and parameter estimation for various strategies.
  Here, $\lambda_1$, $\lambda_2$, and $\lambda_3$ denote the computed generalized eigenvalues corresponding to  $A(\textbf{c})x=\lambda B(\textbf{c})x$ by P-SMLP. The symbol \textit{eps} denotes machine precision, \textit{det} represents the matrix determinant,and \textit{cond} stands for the matrix condition number.} 
  
  \begin{tabular}{>{\centering\arraybackslash}p{1.2cm} 
                  >{\centering\arraybackslash}p{2.2cm} 
                  >{\centering\arraybackslash}p{2.2cm} 
                  >{\centering\arraybackslash}p{1.0cm} 
                  >{\centering\arraybackslash}p{2.2cm} 
                  >{\centering\arraybackslash}p{2.2cm} 
                  >{\centering\arraybackslash}p{2.2cm}}  
    \toprule
    Strategy & \textbf{$|\lambda_1-(-1)|$} & \textbf{$|\lambda_2-0.5|$} & \textbf{$\lambda_3$} & \textbf{$\|\textbf{c}-\textbf{c}^*\|_{\infty}$} & det(B(\textbf{c}))& cond(B(\textbf{c}))\\ 
    \midrule
     1 & $2.384185 \times 10^{-7}$ & $2.980232 \times 10^{-8}$ & $ \infty$ & $1.192093\times 10^{-7}$  &$\text{eps}$ &$1.415333\times 10^{17}$\\ 
     
     2 & $1.192093\times 10^{-7}$ & $1.490116\times 10^{-8}$ & $ \infty$ & $5.960464\times 10^{-8}$ &$\text{eps}$ &$6.399412\times 10^{16}$\\ 
     
     3 & $2.384185\times 10^{-7}$  & $2.980232\times 10^{-8}$ & $ \infty$ &  $1.192093\times 10^{-7}$ & $\text{eps}$ &$1.415333\times 10^{17}$\\ 
    \bottomrule
  \end{tabular}
  \label{Example5}
\end{table}

\begin{table}[h]
  \centering
  \caption{Example 4.5 : Numerical results for the parameter vector $\mathbf{c}=\{c_1,c_2,c_3\}$.} 
  
  \begin{tabular}{>{\centering\arraybackslash}p{1.2cm} 
                  >{\centering\arraybackslash}p{4.2cm} 
                  >{\centering\arraybackslash}p{4.2cm} 
                  >{\centering\arraybackslash}p{2.2cm}}  
    \toprule
    Strategy & $c_1$ & $c_2$ & $c_3$ \\ 
    \midrule
     1 & $0.9999998807907104$ & $0.9999998807907104$ & $1$ \\ 
     
     2 & $0.9999999403953552$ & $0.9999999403953552$ & $1$ \\ 
     
     3 & $0.9999998807907104$ & $0.9999998807907104$ & $1$ \\ 
    \bottomrule
  \end{tabular}
  \label{Example52}
\end{table}
\subsection{PGIEP with singular $B(\textbf{c})$ }
Let $A_0=\textbf{0}$, $B_0=I_3$,
\begin{equation*}
    \begin{aligned}
        A_1=\left(\begin{array}{ccc}
            1 & 0 & 0\\
            0 & 0 & 0\\
             0 & 0 & 0\\
            \end{array}\right), \quad A_2=\left(\begin{array}{ccc}
            0 & 1 & 0\\
            1 & 0 & 0\\
             0 & 0 & 0\\
                \end{array}\right),\quad A_3=\left(\begin{array}{ccc}
            0 & 0 & 0\\
            0 & 0 & 0\\
             0 & 0 & 1\\
                \end{array}\right),
    \end{aligned}
\end{equation*}

\begin{equation*}
    \begin{aligned}
        B_1=\left(\begin{array}{ccc}
            0 & 1 & 0\\
            1 & 0 & 0\\
             0 & 0 & 0
            \end{array}\right),\quad B_2=\left(\begin{array}{ccc}
            0 & 0 & 0\\
            0 & 1 & 1\\
             0 & 1 & 0
                \end{array}\right),\quad B_3=\left(\begin{array}{ccc}
            0 & 0 & 1\\
            0 & 0 & 0\\
             1 & 0 & 0
                \end{array}\right).
    \end{aligned}
\end{equation*}
Let $\textbf{c}^* = (1, 1, 1)^{\top}$, then
\begin{equation*}
B(\textbf{c}^*)=\left(\begin{array}{ccc}
            1 & 1 & 1\\
            1 & 2 & 1\\
             1 & 1 & 1\\
\end{array}\right)
\end{equation*}
is a singular matrix. 
In this case, the generalized eigenvalue problem $A(\mathbf{c}^*) x= \lambda B(\mathbf{c}^*)x$ has one infinite eigenvalue, and the spectrum is
\begin{equation*}
\tilde{\sigma} = \{-1, 0.5,\infty\}.
\end{equation*}
The goal is to find the parameter vector $\textbf{c}=\{c_1,c_2,c_3\}$ such that 
$A(\mathbf{c}) x= \lambda B(\mathbf{c})x$ has the spectrum $\tilde{\sigma}$.
In model (\ref{model2}), we set $\hat{P} = P + \mathbf{e}_3 \mathbf{e}_3^\top$.
In this experiment, a neural network with two hidden layers is employed, each containing 40 neurons.
The maximum number of epochs is set to 10000. The results are displayed in Tables \ref{Example5} and \ref{Example52}.

\textbf{Analysis.}
In this experiment, we investigate the performance of the proposed method in solving the PGIEP where $B(\mathbf{c}^*)$ is a singular matrix. 
Tables  \ref{Example5} and \ref{Example52} report the accuracy of eigenvalue recovery and parameter estimation under three different strategies. 
 All three strategies yield high-precision recovery of the finite eigenvalues, reaching an accuracy of $10^{-7}$. Moreover, $\lambda_3$ is correctly  identified as infinite.
The error $\|\mathbf{c} - \mathbf{c}^*\|_\infty$ is below $10^{-7}$, indicating high parameter estimation accuracy. The near-zero determinant of $B(\mathbf{c})$ confirms its singularity, which is further corroborated by a condition number exceeding $10^{16}$.
These results confirm the effectiveness and numerical stability of the proposed P-SMLP method in handling PGIEP with the singular matrix.

\section{Conclusion}\label{5}
In this paper, a novel P-SMLP is proposed for solving the model on product manifolds of the PGIEP, combining the advantages of orthogonal optimization methods and neural networks.
Unlike the MLP framework, P-SMLP provides an efficient solution for ensuring that the output naturally satisfies the orthogonality constraint, achieved through matrix decomposition techniques to enforce this hard constraint.
The main advantage of the proposed model is that it makes orthogonal optimization strategies a feasible method for PGIEPs, which is not achievable with existing models. This model is applicable to symmetric, asymmetric, and multiple-eigenvalue PGIEPs, demonstrating its universal applicability. Notably, it can also be used to solve PGIEPs where $B(\textbf{c})$ is singular.
The computational results of sufficient numerical experiments demonstrate that the proposed method can effectively solve PGIEPs.
Given these advantages, future work will focus on extending our method to address other IEP-related problems.

\section*{Acknowledgements}
This work is supported in part by the Education Department of Jilin Province (No. JJKH20250297BS).

\bibliographystyle{cas-refs}
\bibliography{cas-refs}

\end{document}